\documentclass[11pt]{article}

\usepackage[utf8]{inputenc}
\usepackage{amsmath, amssymb, amsfonts, mathrsfs, bbm, bbold, array, booktabs, multirow, amsthm, ulem, indentfirst}
\usepackage{graphicx}
\usepackage{bigints}
\usepackage{caption}
\usepackage{geometry}
\usepackage{hyperref}
\usepackage{times}
\usepackage{float}
\usepackage{xcolor}
\usepackage{ragged2e}
\usepackage{csquotes}
\usepackage{longtable}
\usepackage{caption}
\usepackage{ragged2e}
\usepackage{url}
\theoremstyle{plain}
\newtheorem{theorem}{Theorem}[section]
\newtheorem{proposition}[theorem]{Proposition}
\newtheorem{lemma}[theorem]{Lemma}
\newtheorem{corollary}[theorem]{Corollary}

\theoremstyle{definition}
\newtheorem{definition}[theorem]{Definition}

\theoremstyle{remark}
\newtheorem{remark}[theorem]{Remark}

\title{Hypergeometric Series Representations\\ for the Perimeter of Lam\'e Superellipses}
\author{R. Omar Rodriguez\textsuperscript{1} and Yomber Montilla\textsuperscript{2}\\
  \small \textsuperscript{1} Decanato de Ciencias y Tecnolog\'ia, Universidad Centroccidental Lisandro Alvarado, Barquisimeto 3001, Venezuela. \\
\small \textsuperscript{2} Universidad Técnica Estatal de Quevedo, Facultad de Ciencias de la Ingeniería, Quevedo, Ecuador. \\
\small email: romar4473@gmail.com; ymontillal@uteq.edu.ec.
}

\begin{document}
\date{}
\maketitle

\begin{abstract}
We derive exact analytic representations for the perimeter of a Lamé superellipse of degree $s>0$. The result is expressed in terms of two branches defined by series whose terms are Gauss hypergeometric functions: a negative branch for $0<s<1$ and a positive branch for $s>1$. For the positive branch, the convergence condition follows from the Leibniz test; the negative branch, although divergent in the ordinary sense, is shown to be Abel-summable. Consistently with the symmetry under interchange of the semi-axes, the formula is invariant under axis permutation. As $s$ varies, the family interpolates between the Lamé cross and the rectangle, while the case $s=1$ corresponds to the rhombus, which acts as the transition curve with the shortest perimeter within the family.

\noindent\textbf{Keywords:} Lam\'e curves, Superellipses, Perimeter, Hypergeometric-function expansion, Convergence of series, Abel summability.
\end{abstract}

\section{Introduction.}

The superellipse, also known as a Lam\'e curve, is defined in polar
coordinates as follows
\begin{equation}\label{1-r_s(a,b)}
r_{a,b}(\theta)=
\left(
\frac{|\cos\theta|^s}{a^s}
+
\frac{|\sin\theta|^s}{b^s}
\right)^{-1/s}\ ,\qquad 0\leq\theta<2\pi\ ,
\end{equation}
where $a$, $b$, and $s$ are positive parameters that play different roles in the geometric configuration of the curves. Specifically, the parameters $a$ and $b$ represent the major and minor semi-axes and determine the size of the curve, whereas the exponent parameter $s$ determines its characteristic shape. Smaller values of $s$ lead to peak- or corner-shaped structures, while larger values produce smoother, more rounded geometries \cite{Lame1818, Gardner1977}. Thus, this formula defines a continuous spectrum of closed curves ranging from star-shaped geometries to rectangles, with elliptical forms appearing as intermediate cases.

This geometric flexibility makes the superellipse especially well suited for a wide variety of applications. In granular mechanics, statistical physics, and computational fluid dynamics \cite{Grohn2023, Wang2024, Yuan2020, KornickFranklin2021, Colt2022, Mizani2025}, superellipses have been used to model convex nonspherical particles. In optics and electromagnetism, they have been employed in the design of diffractive apertures, waveguides, and microstrip transitions \cite{SanchezBrea2024, IEEE2024, MDPI2020}. In solid-state physics, the superellipse has been proposed as a model for the spontaneous magnetization of ferromagnets \cite{Perevertov2025}, in medical physics for the characterization of radiotherapy fields \cite{MendezCasar2021, Mendez2026}, and in photovoltaic engineering for modeling solar-panel I-V curves \cite{OlayiwolaChoi2023}.

From a mathematical standpoint, the most significant advances can be organized along two lines. The first concerns the geometric properties of the curve: Ref.\cite{Gridgeman1970} systematized the basic properties of Lam\'e curves, and, more recently, \cite{FarhadianPonomarenko2023} studied the constant $\pi_p$ — the perimeter of the supercircle of exponent $p$ divided by its diameter — establishing extremal properties and its analogy with the golden ratio. The second line concerns the connection with special functions: Ref.\cite{FiedorowiczRamalingam2024, FiedorowiczRamalingam2026} proved that the area enclosed by the Lamé curve $r=(\cos^{2n}\theta+\sin^{2n}\theta)^{-1/2n}$ is proportional to the arc length of the associated sinusoidal spiral $r^n=\cos(n\theta)$. Both quantities reduce to the same Euler integral, which can be expressed as a value of $_2F_1$ on the unit circle, establishing the most direct bridge between the geometry of Lamé curves and hypergeometric functions. In particular, for $n=2$
this reduces to the well-known relation between the squircle $r=(\cos^{4}\theta+\sin^{4}\theta)^{-1/4}$ and the lemniscate of Bernoulli $r^2=\cos(2\theta)$.

These advances, however, have not led to a closed-form expression for the general perimeter of the superellipse. The case $s=2$ is the most thoroughly developed: Ref.\cite{Abbott2009} showed that the exact representations of Maclaurin, Gauss-Kummer, and Euler are related through quadratic transformations of $_2F_1$, and obtained a formula symmetric in $a$ and $b$ in terms of the Legendre function $P_{1/2}$. In the isotropic case $a=b$, the corresponding supercircle was studied in Ref.\cite{MontillaRodriguezLinares2026}, where an exact arc-length formula was derived and related to a hypergeometric representation of $\pi$. This result provides a natural consistency check for the present formulation when the semi-axes coincide. For $s>1$, \cite{Tanaka} presented an unpublished technical report deriving the arc length as a double series in Pochhammer symbols. The expression is mathematically consistent within its domain of validity and was verified with numerical examples for $s = 5/2$, $s = 2$, and $s = 1$. \cite{Erbas2022} proposed a closed-form approximation valid over a broader range, achieving a relative error below $\sim10^{-4}$.

Part of the difficulty in obtaining a general expression for the perimeter of a superellipse lies in identifying a partition angle that allows the curve, in the first quadrant, to be decomposed into two arcs whose lengths admit tractable integral representations. As shown in \cite{MontillaRodriguezLinares2026}, in the isotropic case $a=b$, corresponding to supercircles, this angle is $\pi/4$. In the present work, we determine the partition angle for the anisotropic case $a\neq b$ through an analysis of the behavior of the curvature. We also obtain an exact representation for the perimeter of the superellipse in terms of a series expansion whose terms involve Gauss hypergeometric functions with parameters depending on the summation index, in accordance with the type of expansions considered in \cite{IshkhanyanShahverdyanIshkhanyan2014}. The resulting representation is conditionally convergent for $s>1$, whereas for $0<s<1$ it is interpreted through Abel regularization.

The paper is organized as follows. In Section \ref{Section2}, we briefly review the geometric properties of superellipses that are needed to introduce the partition angle. In Sections \ref{Section3} and \ref{Section4}, we derive an exact representation for the perimeter of a Lam\'e curve with $s>0$, $s\neq 1$. Owing to the symmetry under the interchange of the semi-axes, it is sufficient to consider the case $a\geq b$; the case $b\geq a$ follows analogously. The resulting formula is symmetric with respect to the interchange $a\leftrightarrow b$ and decomposes into two branches separated by the critical value $s=1$. In Section \ref{Section4}, we also analyze the convergence and divergence properties of this representation. As a consistency check, several well-known special cases are recovered in Section \ref{Section5}.

In Section \ref{Section6}, we study the behavior of the perimeter as a function of the parameter $s$. We show that the family interpolates from the Lam\'e cross, in the limit $s\to 0^+$, through the rhombus, as $s\to 1^\pm$, to the rectangle, as $s\to\infty$. This analysis allows us to identify the extremal values that bound the perimeter within the family under consideration. Finally, in Section \ref{Conclusions}, we present the conclusions. The appendices contain complementary technical details concerning the consistency of the approach, the hypergeometric properties used throughout the paper, and the completeness of the proofs.

\section{Preliminaries.}\label{Section2}

\begin{definition}{(Lam\'e superellipse)} 
Let $s$, $a$, $b\in \mathbb{R}^+$ and $k\in\mathbb{Z}$. Then, the superellipses form a three-parameter family of closed planar curves whose points satisfy
\begin{equation}\label{r_s(a,b)}
\mathscr{C}_s(a,b)
=
\left\{(r,\theta)\in[0,\infty)\times[0,2\pi):\ 
r=r_{a,b}(\theta)\right\}\ ,
\end{equation}
where $r_{a,b}(\theta)$ is given by (\ref{1-r_s(a,b)}). 

We recall the following elementary geometric properties of superellipses.
\begin{remark} The shape of $\mathscr{C}_s$ depends on $s$ as follows.
\begin{itemize}
  \item[(i)] The curves $\mathscr{C}_s$ have four corners located at $\theta_k = k\pi/2$.
  \item[(ii)] If $0<s<1$, the corners are peak-like and the sides are concave toward the origin.
  \item[(iii)] If $s=1$, the vertices are ordinary corners and the sides are straight diagonal lines.
  \item[(iv)] If $s>1$, the corners are rounded and the sides are convex away from the origin.
\end{itemize}

\end{remark}
\end{definition}

\begin{remark} The parameters $a$ and $b$ determine the semi-axes and axial symmetries of $\mathscr{C}_s$ in the following sense.
\begin{itemize}
\item[(i)] If $a\neq b$, the curves $\mathscr{C}_s$ have two semi-axes at
\begin{equation}
     r(k\pi)=a \quad\text{and}\quad r\left(\frac{2k+1}{2}\pi\right)=b\ ,\quad k\in\mathbb{Z}\ .
\end{equation}
The semi-major and semi-minor axes of $\mathscr{C}_s(a,b)$ have lengths $\max\{a,b\}$ and $\min\{a,b\}$, respectively. Thus, if $a>b$, the semi-major axis is aligned with the horizontal direction, whereas if $b>a$, it is aligned with the vertical direction.

\item[(ii)] If $a\neq b$, the curve has elliptical symmetry and is invariant under the discrete rotation
\begin{equation}
\theta \mapsto \theta+k\pi,\qquad k\in\mathbb{Z}\ .
\end{equation}
In the particular case $a=b$, the curves exhibit circular symmetry and are invariant under
\begin{equation}
\theta \mapsto \theta+\frac{k\pi}{2},\qquad k\in\mathbb{Z}\ .
\end{equation}

\item[(iii)] The curve is invariant under axial reflection symmetries with respect to the coordinate axes. In polar form, these symmetries are given by

\begin{equation}
\theta\mapsto -\theta
\qquad\text{and}\qquad
\theta\mapsto \pi-\theta\ .
\end{equation}
Consequently, the four quadrant arcs have the same length.
\end{itemize}
\end{remark}

\begin{remark}{ (Symmetry of the perimeter)} The perimeter of $\mathscr{C}_s(a,b)$ provides a natural example of a geometric quantity depending on the three parameters $s$, $a$, and $b$. Although the interchange of the semi-axes $a$ and $b$ does not leave the curve fixed as a subset of the plane, it maps $\mathscr{C}_s(a,b)$ onto a congruent curve, namely $\mathscr{C}_s(b,a)$, through the interchange of the coordinate axes. Since arc length is invariant under such isometries, the perimeter satisfies
\begin{equation}
    L_{(s)}(a,b)=L_{(s)}(b,a).
\end{equation}
\end{remark}

To the best of our knowledge, no closed-form analytic expression for $L_{(s)}(a,b)$ valid throughout the full parameter space has been reported so far.

Before proceeding, two remarks are in order. First, since the basic geometric properties of superellipses are well known, the following subsections are not intended to rederive them from first principles. Instead, relying on the axial symmetries of these curves, we use them to verify the occurrence of corner-type behavior and to identify the corresponding concavity regimes. Second, throughout this article, and within the scope of the present study, we assume without loss of generality that $a\geq b$. This convention is justified by the symmetry of the problem under the exchange of the semi-axes and is discussed in detail in Section (\ref{Section4}).

\subsection{Peaks, corners and smoothness.}\label{R-R}

As noted above, by axial symmetry it is enough to analyze the possible loss of smoothness at the vertices corresponding to $\theta_k=k\pi/2$, with $k=0,\ldots,3$.
\begin{lemma}[Regularity at the vertices]\label{Lemma-2.5} 
Let $a$,$b>0$ and $s>0$. At each vertex $\theta_k=k\pi/2$, the superellipse $\mathscr{C}_s(a,b))$ satisfies the following classification.
\begin{itemize}
    \item[(i)] If $s>1$, the curve is smooth, $C^1$, at $\theta_k$.
    \item[(ii)] If $s=1$, the curve has an angular point, that is, a finite but discontinuous derivative.
\item[(iii)] If $0<s<1$, the curve has a cusp, that is, infinite one-sided derivatives with opposite
signs.
\end{itemize}
\end{lemma}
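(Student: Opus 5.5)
By the reflection symmetries $\theta\mapsto-\theta$ and $\theta\mapsto\pi-\theta$ recorded in the preceding Remark, it suffices to treat the vertex $\theta_0=0$, i.e.\ the point $(a,0)$. The vertex at $\theta=\pi/2$ is handled identically after interchanging $a\leftrightarrow b$ and $x\leftrightarrow y$. Rather than differentiate the polar expression (\ref{1-r_s(a,b)}) directly, I will use the equivalent Cartesian description. Setting $x=r\cos\theta$, $y=r\sin\theta$, the curve is $|x/a|^s+|y/b|^s=1$. Near $(a,0)$ we have $x>0$, so the curve is locally the graph
\begin{equation*}
x=g(y)=a\left(1-\left|\frac{y}{b}\right|^{s}\right)^{1/s},\qquad |y|<b .
\end{equation*}
Its regularity at $y=0$ therefore decides the type of vertex. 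The graph is even in $y$, so the two one-sided tangent directions are reflections of each other.

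The key step is the local expansion $g(y)=a-\frac{a}{s}\,|y/b|^{s}+o(|y|^{s})$, which follows from $(1-u)^{1/s}=1-u/s+O(u^2)$ with $u=|y/b|^s\to0$. For $y\neq0$ the derivative is
\begin{equation*}
g'(y)=-\frac{a}{b^{s}}\,\mathrm{sgn}(y)\,|y|^{s-1}\left(1-\left|\frac{y}{b}\right|^{s}\right)^{1/s-1}.
\end{equation*}
The last factor tends to $1$ as $y\to0$, so everything reduces to the behaviour of $|y|^{s-1}$.

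\emph{Case $s>1$.} Here $|y|^{s-1}\to0$, so $g'(0^\pm)=0$. The difference quotient $(g(y)-g(0))/y=O(|y|^{s-1})\to0$, hence $g$ is differentiable at $0$ and $g'$ is continuous there. The curve is therefore a $C^1$ graph with a vertical tangent at $(a,0)$, which gives (i).

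\emph{Case $s=1$.} Here $g(y)=a(1-|y|/b)$, so $g'(0^\pm)=\mp a/b$. These are finite but unequal, so the vertex is an angular point, which gives (ii).

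\emph{Case $0<s<1$.} Here $|y|^{s-1}\to\infty$, so $g'(y)\to\mp\infty$ as $y\to0^\pm$. Inverting locally, $y$ as a function of $x$ near $x=a^-$ satisfies $|y|=b\bigl(1-(x/a)^s\bigr)^{1/s}\sim b\,(s(a-x)/a)^{1/s}$, whose derivative tends to $0$. Both branches therefore arrive tangent to the $x$-axis from opposite sides, and the curve has a cusp. In the paper's graph convention the one-sided derivatives $dx/dy$ are infinite with opposite signs, which is (iii).

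The main obstacle is not computational. It is making the notion of ``derivative'' precise, because the statement is phrased in terms of derivatives without fixing a parametrization. I will first fix the graph parametrization $x=g(y)$ and state the conclusions in those terms. I will then check that they agree with the polar parametrization via $dr/d\theta$, using $r'(\theta)\sim -\frac{a}{a^{s}}\cdot\frac{a^{s}}{b^{s}}\,|\theta|^{s-1}\mathrm{sgn}(\theta)\,a$, which has the same $|\theta|^{s-1}$ behaviour. Finally, I will note that geometric tangency is independent of the chosen parametrization.
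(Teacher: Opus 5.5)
Your proof is correct, but it takes a different route from the paper. The paper stays in polar form. It writes $r$ separately on each quadrant ($r_{\mathrm{I}},\dots,r_{\mathrm{IV}}$) and compares the one-sided limits of $dr/d\theta$ at $\theta=0$ and $\theta=\pi/2$. This gives jumps $0$, $-2a^2/b$ (resp.\ $-2b^2/a$) and $-\infty$ in the three regimes.

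You instead pass to the implicit equation $|x/a|^s+|y/b|^s=1$ and the local graph $x=g(y)$, where the single factor $|y|^{s-1}$ decides everything. Your route makes the geometric conclusion immediate, since a $C^1$ graph is a $C^1$ curve. The paper's criterion, continuity of $dr/d\theta$, becomes a statement about the curve only through the unstated fact that $r>0$ makes $\theta\mapsto r(\theta)(\cos\theta,\sin\theta)$ a regular parametrization. Your inversion $|y|\sim b\,(s(a-x)/a)^{1/s}$ also identifies the cusp tangent as the axis, which the paper leaves implicit.

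The paper's route, in turn, fits the polar arc-length formalism used afterwards and records explicit jump values. In the paper, ``derivative'' means $dr/d\theta$, not $dx/dy$. Your planned polar cross-check is therefore essentially the paper's proof.

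That cross-check, as you wrote it, needs correcting. Differentiating gives
\[
r'(\theta)=a\theta\,\bigl(1+o(1)\bigr)-\frac{a^{s+1}}{b^{s}}\,\mathrm{sgn}(\theta)\,|\theta|^{s-1}\bigl(1+o(1)\bigr),\qquad \theta\to0,
\]
so the constant is $a^{s+1}/b^{s}$, not $a^{2}/b^{s}$; the two agree only at $s=1$. Moreover, for $s\ge2$ the term $a\theta$, which comes from differentiating $\cos^s\theta$, is of the same or larger order, so your ``$\sim$'' is false there.

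None of this affects your conclusions. In every case $r'\to0$ for $s>1$, $r'\to\mp a^2/b$ as $\theta\to0^\pm$ at $s=1$, and $r'\to\mp\infty$ for $0<s<1$. The formula should still be fixed if you include the check.
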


\begin{proof}
We analyze the local regularity of the curve at the vertices by comparing the corresponding one-sided derivatives of its real-valued polar representation. Since $s>0$ is arbitrary, the polar equation must be written separately in each quadrant, so that all powers are taken over nonnegative quantities. This piecewise representation allows us to identify, on each angular interval, the branch containing the vertex under consideration. Thus, for $\theta\in(0,\pi/2]$ and $\theta\in[\pi/2,\pi]$, we consider the branches 
\begin{equation}\label{r_1,r_2}
r_{\mathrm{I}}=
\left(
\frac{\cos^s\theta}{a^s}
+
\frac{\sin^s\theta}{b^s}
\right)^{-1/s}
\quad\text{and}\quad r_{\mathrm{II}}=
\left(
\frac{(-\cos\theta)^s}{a^s}
+
\frac{\sin^s\theta}{b^s}
\right)^{-1/s}\ ,
\end{equation}
respectively, whereas for $\theta\in[\pi,3\pi/2]$ and $\theta\in[3\pi/2,2\pi]$, we consider 
\begin{equation}
r_{\mathrm{III}}=
\left(
\frac{(-\cos\theta)^s}{a^s}
+
\frac{(-\sin\theta)^s}{b^s}
\right)^{-1/s}
\qquad\text{and} \qquad
r_{\mathrm{IV}}=
\left(
\frac{\cos^s\theta}{a^s}
+
\frac{(-\sin\theta)^s}{b^s}
\right)^{-1/s}
\end{equation}
respectively.

Thus, the branches $r_{\mathrm{I}}$ and $r_{\mathrm{II}}$ meet at the vertex corresponding to $\theta=\pi/2$, whereas $r_{\mathrm{I}}$ and $r_{\mathrm{IV}}$ meet at the vertex corresponding to $\theta=0$, the latter being identified with $\theta=2\pi$ by periodicity. Similarly, the vertices corresponding to $\theta=\pi$ and $3\pi/2$ are obtained from the previous cases by symmetry and periodicity. Therefore, it is enough to compare the adjacent one-sided derivatives at $\theta=0$ and $\pi/2$. We obtain
\begin{equation}\label{r^prime1}
   \lim_{\theta\rightarrow 0^+}r^\prime_{\mathrm{I}}(\theta) -\lim_{\theta\rightarrow 2\pi^-}r^\prime_{\mathrm{IV}}(\theta)=
\begin{cases}
0\ , & s>1\ , \\
-2a^2/b\ ,   & s=1\ , \\
-\infty\ ,  & 0<s<1
\end{cases} 
\end{equation}
and
\begin{equation}\label{r^prime2}
\lim_{\theta\rightarrow \pi/2^+} r^\prime_{\mathrm{II}}\left(\theta\right)-\lim_{\theta\rightarrow \pi/2^-}  r^\prime_{\mathrm{I}}\left(\theta\right)=
\begin{cases}
0\ , & s>1\ , \\
-2b^2/a\ ,   & s=1\ , \\
-\infty\ ,  & 0<s<1\ .
\end{cases}
\end{equation}

Equations (\ref{r^prime1}) and (\ref{r^prime2}) give precisely the three cases stated in the lemma. The remaining vertices are obtained from these two cases by axial symmetry and periodicity. Hence, the classification holds for every $\theta_k=k\pi/2$, with $k=0,\ldots,3$, and the proof is complete.
\end{proof}

\subsection{Concavity and the transition angle.}

Because of the symmetries of the family of $\mathscr{C}_s$-curves, it is enough to analyze the signed curvature in the first quadrant, where $r=r_{\mathrm I}$ is given by (\ref{r_1,r_2}). For a polar curve, the signed curvature with respect to the origin is given by
\begin{equation}
k_s=\frac{r^2+2 r^{\prime 2}-r r^{\prime\prime}}{(r^2+r^{\prime 2})^{3/2}}\ .
\end{equation}
A direct computation gives
\begin{equation}
    k_s(\theta)=(s-1)\frac{(ab)^{2s}}{2^{s-2}}\frac{(\sin2\theta)^{s-2}}{r^{s+1}}\left(b^{2s}(\cos\theta)^{2(s-1)}+a^{2s}(\sin\theta)^{2(s-1)}\right)^{-3/2}\ .
\end{equation}
Since $\theta\in(0,\pi/2)$, all factors in this expression are strictly positive except $(s-1)$. Therefore, the sign of the signed curvature is completely determined by $(s-1)$. This yields the following lemma.
\begin{lemma}[Concavity regimes]\label{Lemma-2.6}
     Let $a,b>0$ and $s>0$. In the first quadrant $\text{sgn}(k_s(\theta))=\text{sgn}(s-1)$. In particular: $(i)$ if $s>1$, all four arcs are convex; $(ii)$ if $s=1$, $ks=0$ (straight sides);
$(iii)$ if $0<s<1$, all four arcs are concave toward the origin. 
\end{lemma}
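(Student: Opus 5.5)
The plan is to verify the curvature formula independently, in a form that makes the sign visible, and then read off the three regimes; the extension to all four arcs will follow from the reflection symmetries in the Remark on semi-axes and symmetries. Rather than differentiate $r_{\mathrm I}$ twice in polar form, I will pass to the Cartesian parametrization of the first-quadrant arc, $x=r_{\mathrm I}(\theta)\cos\theta$, $y=r_{\mathrm I}(\theta)\sin\theta$. Equivalently, the arc is the level set $F(x,y)=(x/a)^s+(y/b)^s=1$ with $x,y>0$. For an implicitly defined curve the curvature is
\begin{equation*}
\kappa=\frac{F_{xx}F_y^2-2F_{xy}F_xF_y+F_{yy}F_x^2}{(F_x^2+F_y^2)^{3/2}}\ ,
\end{equation*}
up to an orientation sign. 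Since $F_{xy}=0$, the numerator becomes
\begin{equation*}
s^3(s-1)\,\frac{x^{s-2}y^{s-2}}{a^sb^s}\left(\frac{y^{s}}{b^{s}}+\frac{x^{s}}{a^{s}}\right)=s^3(s-1)\frac{x^{s-2}y^{s-2}}{a^sb^s}\ ,
\end{equation*}
where the last equality uses $F=1$.

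For $\theta\in(0,\pi/2)$ both $x$ and $y$ are strictly positive, so every factor is positive except $(s-1)$. I will then check that the orientation convention matches the polar formula for $k_s$, with the sign measured relative to the origin. The quickest check is the case $s=2$, where the ellipse must have $k_s>0$. Substituting $x=r\cos\theta$ and $y=r\sin\theta$ then reproduces the displayed expression: $x^{s-2}y^{s-2}=r^{2s-4}(\sin2\theta/2)^{s-2}$, and $F_x^2+F_y^2$ gives the bracket $b^{2s}\cos^{2(s-1)}\theta+a^{2s}\sin^{2(s-1)}\theta$. This yields $\operatorname{sgn}k_s=\operatorname{sgn}(s-1)$ on the open quadrant.

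The three cases then follow directly. For $s>1$ the arc turns toward the origin, so it is convex. For $s=1$ the curvature vanishes identically; indeed $x/a+y/b=1$ is a line segment. For $0<s<1$ the arc bends toward the origin, i.e. it is concave in the sense of the statement. Applying the reflections $\theta\mapsto-\theta$ and $\theta\mapsto\pi-\theta$ carries the first-quadrant arc onto the other three arcs. These reflections reverse orientation, but they also reverse the radial direction in the matching way, so the sign relative to the origin is preserved.

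The main obstacle I expect is bookkeeping rather than a real difficulty. It consists of matching the sign convention of the implicit-curvature formula with the polar signed curvature $k_s$, and confirming the exact prefactor $(ab)^{2s}/2^{s-2}$ and the power $r^{s+1}$. Only positivity is needed for the lemma, so any discrepancy in the positive constants is harmless. The sign itself I will pin down using the ellipse test case.
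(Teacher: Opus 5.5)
Your proposal is correct and is essentially the paper's argument: compute $k_s$ explicitly on the open first quadrant, observe that every factor except $(s-1)$ is positive, and extend by the axial symmetries. Writing the arc as the level set $F=(x/a)^s+(y/b)^s=1$ is a tidier way to carry out the ``direct computation'' that the paper only asserts, and it does reproduce the displayed formula, including the prefactor $(ab)^{2s}/2^{s-2}$ and the power $r^{-(s+1)}$. Your ellipse calibration is legitimate because $\nabla F$ points away from the origin for every $s>0$, so the sign convention does not depend on $s$.

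Two wording fixes are needed. First, you use ``toward the origin'' for both $s>1$ and $0<s<1$, with opposite meanings. Say instead that the curvature vector points to the origin's side of the tangent line for $s>1$ and away from it for $0<s<1$. Second, the reflections fix the origin; they do not reverse radial directions. The clean justification is that $r_{a,b}(-\theta)=r_{a,b}(\pi-\theta)=r_{a,b}(\theta)$, while the polar formula for $k_s$ depends on $r'$ only through $r'^2$. Hence $k_s(-\theta)=k_s(\pi-\theta)=k_s(\theta)$.
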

The result follows directly from the preceding computation in the first quadrant and extends to the remaining sides by axial symmetry and periodicity.
\begin{figure}[H]
\centering
\includegraphics[width=0.85\linewidth]{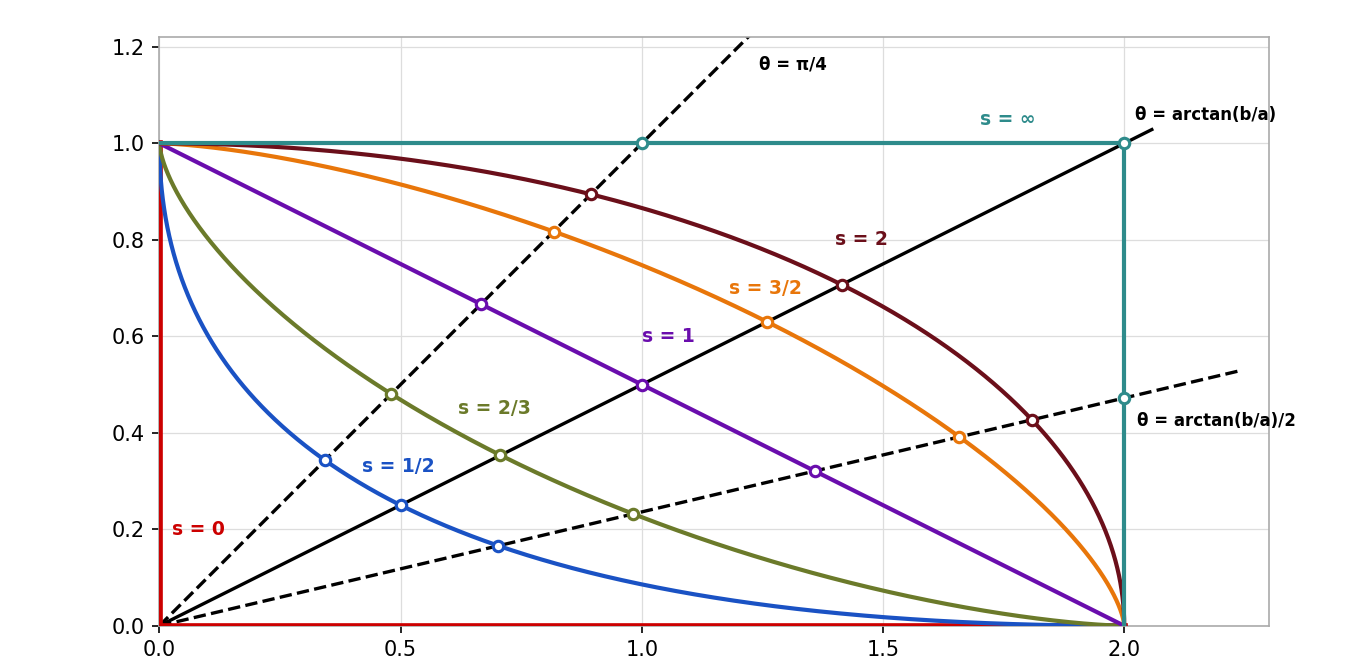}
\caption{Plots of 
several curves for $0<s<1$ and $s>1$ and three radial lines associated with $\theta=\theta_0/2,\ \theta_0$ and $\pi/4$.}
\label{Primera imagen}
\end{figure}

The previous lemma characterizes the concavity of the Lamé curve in terms of the parameter $s$ and the angular position $\theta$. However, the plots in Fig.~\ref{Primera imagen} suggest that this behavior is not distributed uniformly throughout the first quadrant. Instead, there exists a distinguished direction that separates two angular sectors in which the limiting arcs exhibit different asymptotic tendencies. This direction is determined only by the semi-axes $a$ and $b$, and therefore provides a natural geometric reference for comparing the concavity above and below it. We formalize this observation in the following remark.
\begin{remark}[The transition angle]\label{Remark2.7}
Let
\begin{equation}
\theta_0=\arctan(b/a).
\end{equation}
Then $\theta_0$ has the following properties.
\begin{enumerate}
    \item[(i)] \textit{Diagonal direction.}
    Since $\tan\theta_0=b/a$, the ray $\theta=\theta_0$ is the diagonal direction associated with the rectangle determined by the semi-axes $a$ and $b$. Equivalently, every point on this ray satisfies $y/x=b/a$.
    \item[(ii)] \textit{Radial value.}
    Along this direction, the polar radius is given by
    \begin{equation}
r(\theta_0)=2^{-1/s}\sqrt{a^2+b^2}.
    \end{equation}
    Consequently,
    \begin{equation}
    \lim_{s\to 0^+}r(\theta_0)=0,\qquad\lim_{s\to\infty}r(\theta_0)=\sqrt{a^2+b^2}.
    \end{equation}
    \item[(iii)] \textit{Sector decomposition.}
    The angle $\theta_0$ divides the first quadrant into two regions: the upper sector and the lower sector, respectively given by $\pi/2>\theta>\theta_0$ and $\theta_0>\theta>0$.
    Hence, $\theta_0$ provides a reference direction for comparing the behavior of the arcs located above and below the diagonal ray.
    \item[(iv)] \textit{Curvature along the transition direction.}
    Along $\theta=\theta_0$, the curvature reduces to
    \begin{equation}
    k_s(\theta_0)
    =
    (s-1)2^{1+1/s}
    \frac{ab}{(a^2+b^2)^{3/2}}.
    \end{equation}
    Therefore, $k_s(\theta_0)<0$ for $0<s<1$, $k_s(\theta_0)=0$ for $s=1$, and $k_s(\theta_0)>0$ for $s>1$. Moreover,
    \begin{equation}
    k_s(\theta_0)\to -\infty
    \quad \text{as} \quad s\to 0^+,
    \qquad
    k_s(\theta_0)\to +\infty
    \quad \text{as} \quad s\to\infty.
    \end{equation}
\end{enumerate}
\end{remark}

Thus, $\theta_0$ admits a clear geometric interpretation: it is the direction along which the transition between the two limiting geometries is organized. As $s\to0^+$, the superellipse collapses toward an $L$-shaped limit: the arcs above $\theta_0$ approach the vertical segment, whereas those below $\theta_0$ approach the horizontal segment. In contrast, as $s\to\infty$, the curve approaches the bounding rectangle: the arcs above $\theta_0$ become asymptotically horizontal, while those below $\theta_0$ become asymptotically vertical. Along the transition ray itself, the limiting value $r(\theta_0)\to\sqrt{a^2+b^2}$ corresponds to the vertex $(a,b)$, as shown in Fig.~\ref{Primera imagen}. This behavior is consistent with the divergence of $k_s(\theta_0)$ and is further illustrated by the representative rays displayed in Fig.~\ref{Segunda imagen}. The representative rays $\theta=\pi/4$ and $\theta=\theta_0/2$, shown in Fig.~\ref{Segunda imagen}, illustrate this sectorial behavior: they serve only as auxiliary directions for comparing the arcs above and below the transition angle $\theta_0$.

\begin{figure}[H]
\centering
\includegraphics[width=0.85\linewidth]{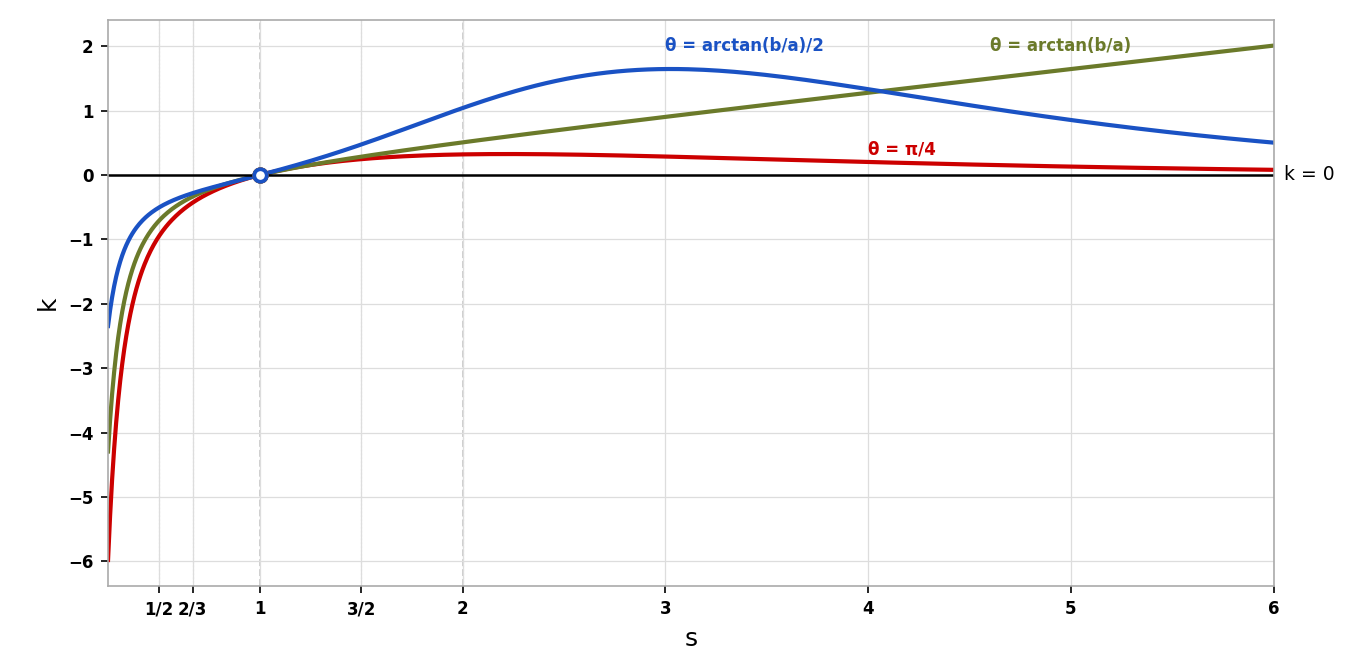}
\caption{Plots of curvature $k_s(\theta)$ along the radial lines $\theta=\theta_0/2,\ \theta_0$ and $\pi/4$.}
\label{Segunda imagen}
\end{figure}

Hence, $\theta_0$ is not merely a reference angle; it defines a natural partition of the curve into two arcs with distinct geometric behavior. This partition will also be relevant for the perimeter problem, where the arc length is naturally decomposed into contributions associated with the regions above and below the transition direction.

\section{Perimeter of the Superellipse.}\label{Section3}

The superellipse $\mathscr{C}_s(a,b)$ is invariant under axial reflections with respect to the coordinate axes. Hence, its total perimeter is four times the length of the arc contained in the first quadrant. Therefore, for a polar representation, $r=r(\theta)$, the total length is given by
\begin{equation}\label{Lquadrant}
L=4\int_0^{\pi/2}\sqrt{r^2+r^{\prime 2}}\ d\theta.
\end{equation}
\begin{theorem}\label{Thm-3.1}
    Let $s$, $a$, and $b > 0$. The perimeter of the superellipse $\mathscr{C}_s(a,b)$ is given by
    \begin{equation}\label{L-general}
L_{(s)}=
4\int_0^{\pi/2}
\left(
\frac{\cos^s\theta}{a^s}
+
\frac{\sin^s\theta}{b^s}
\right)^{-(s+1)/s}
\left(
\frac{(\cos\theta)^{2(s-1)}}{a^{2s}}
+
\frac{(\sin\theta)^{2(s-1)}}{b^{2s}}
\right)^{1/2}
d\theta.
\end{equation}
\end{theorem}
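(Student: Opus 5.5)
We start from the quadrant formula (\ref{Lquadrant}), which is justified by the axial symmetries recorded in the preceding remarks: each of the four quadrant arcs has the same length. In the first quadrant we use the branch $r=r_{\mathrm I}$ of (\ref{r_1,r_2}). It is convenient to write
\[
F(\theta)=\frac{\cos^s\theta}{a^s}+\frac{\sin^s\theta}{b^s},\qquad r=F^{-1/s}.
\]
The whole proof then reduces to simplifying $r^2+r'^2$ algebraically.

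\textbf{Step 1: differentiate.} On $(0,\pi/2)$ we have
\[
r'=-\tfrac1s F^{-1/s-1}F'
\quad\text{with}\quad
F'=s\Big(-\frac{\cos^{s-1}\theta\sin\theta}{a^s}+\frac{\sin^{s-1}\theta\cos\theta}{b^s}\Big).
\]
This gives
\[
r^2+r'^2=F^{-2/s-2}\big(F^2+(F'/s)^2\big).
\]

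\textbf{Step 2: simplify the key identity.} Put $u=\cos^{s-1}\theta/a^s$ and $v=\sin^{s-1}\theta/b^s$. Then
\[
F=u\cos\theta+v\sin\theta,\qquad F'/s=-u\sin\theta+v\cos\theta.
\]
These are the components of the vector $(u,v)$ in a basis rotated by $\theta$, so
\[
F^2+(F'/s)^2=u^2+v^2=\frac{\cos^{2(s-1)}\theta}{a^{2s}}+\frac{\sin^{2(s-1)}\theta}{b^{2s}}.
\]
Taking the square root yields the integrand
\[
\sqrt{r^2+r'^2}=F^{-(s+1)/s}\big(u^2+v^2\big)^{1/2},
\]
which, substituted into (\ref{Lquadrant}), is exactly (\ref{L-general}).

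\textbf{Step 3: check the endpoints.} The identity holds pointwise on the open interval $(0,\pi/2)$. We must still confirm that the integral is meaningful, possibly as an improper integral, so that it genuinely equals the arc length.
\begin{itemize}
\item[(a)] For $s\ge1$ the integrand is continuous on $[0,\pi/2]$, so there is nothing to check.
\item[(b)] For $0<s<1$, the term $\cos^{2(s-1)}\theta$ blows up as $\theta\to\pi/2$, and $\sin^{2(s-1)}\theta$ blows up as $\theta\to0$. This is consistent with the cusps of Lemma \ref{Lemma-2.5}. Near $\theta=0$ we have $F\to a^{-s}$, so the integrand behaves like $\theta^{s-1}$, which is integrable since $s>0$. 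The endpoint $\theta=\pi/2$ is symmetric.
\end{itemize}
Arc length is additive and equals the limit of the integrals over $[\epsilon,\pi/2-\epsilon]$, because the curve is rectifiable: it is a monotone graph in each quadrant. The improper integral therefore coincides with the quadrant length.

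The only genuinely delicate point is this endpoint integrability for $0<s<1$; the rest is the rotation identity of Step 2. I expect that identity to be the main computational hurdle if it is attempted by brute expansion instead.
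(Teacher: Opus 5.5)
Your proof is correct and follows the paper's route: the paper's proof is the one-line "direct substitution of the polar representation into the quadrant arc-length formula (\ref{Lquadrant})". You additionally supply two details the paper leaves implicit: the rotation identity $F^2+(F'/s)^2=u^2+v^2$, which produces the second factor of the integrand, and the check that for $0<s<1$ the integrand is improperly integrable at $\theta=0$ and $\theta=\pi/2$ (it behaves like $\theta^{s-1}$), so that the integral genuinely equals the arc length of the cusped curve.
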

\begin{proof}
The result follows by direct substitution of the polar representation  (\ref{1-r_s(a,b)}) into the arc-length formula (\ref{Lquadrant}).
\end{proof}

\begin{theorem}[Hypergeometric series representation]\label{Thm-3.2}
Let $a\geq b>0$, $s>0$, and $s\neq 1$. For notational convenience, set
\begin{equation}
\alpha:=1+\frac{1}{s},
\end{equation}
and, for $k\in\mathbb{N}_0$, define
\begin{equation}\label{ParameterS>1}
v_k^{+}=\frac{1+2k(s-1)}{s},
\qquad
u_k^{+}=\frac{sk+1}{2(s-1)},
\end{equation}
\begin{equation}\label{Eq-parameters-minus}
v_k^{-}=1+\frac{2k(1-s)}{s},
\qquad
u_k^{-}=\frac{s(k+1)}{2(1-s)}.
\end{equation}
Then the perimeter of $\mathscr{C}_s(a,b)$ admits the following branch representations
\begin{itemize}
    \item[(A)] Positive branch. If $s>1$, then $L_{(s)}=L^+_{(s)}$ where
\begin{equation}\label{L-final-ngreater1}
\begin{split}
L^+_{(s)}&=\frac{4a}{s}
\sum_{k=0}^\infty
\frac{(-1)^k (-1/2)_k}{k!}(b/a)^{2k}
\frac{1}{v^+_k}
\,{}_2F_1\left(\alpha,v^+_k;v^+_k+1;-1\right)\\
&+
\frac{2b}{s-1}
\sum_{k=0}^\infty
\frac{(-1)^k(\alpha)_k}{k!}
\frac1{u^+_k}
\,{}_2F_1\left(-1/2,u^+_k;u^+_k+1;-a^2/b^2\right)\ .
\end{split}
\end{equation}

    \item[(B)] Negative branch. If $0<s<1$, then $L_{(s)}=L^-_{(s)}$ where 
\begin{equation}\label{L-final-nless1}
\begin{aligned}
L^-_{(s)}&=
\frac{4a}{s}
\sum_{k=0}^\infty
\frac{(-1)^k(-1/2)_k}{k!} (b/a)^{2k}
\frac{1}{v^-_k}
\,{}_2F_1\left(\alpha,v^-_k;v^-_k+1;-1\right)\\
& +\frac{2b}{1-s}
\sum_{k=0}^\infty
\frac{(-1)^k(\alpha)_k}{k!}
\frac1{u^-_k}
\,{}_2F_1\left(-1/2,u^-_k;u^-_k+1;-a^2/b^2\right)\ ,
\end{aligned}
\end{equation}
with the equality understood in the Abel-regularized sense.
\end{itemize}
\end{theorem}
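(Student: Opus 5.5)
The plan is to start from the integral representation of Theorem~\ref{Thm-3.1} and split the range $[0,\pi/2]$ at the transition angle $\theta_0=\arctan(b/a)$ of Remark~\ref{Remark2.7}. On each of the two sectors I will change to a variable that runs over $[0,1]$. On the sector below $\theta_0$ I will factor out the cosine terms and set $w=(a\tan\theta/b)^s$, so that $w\in[0,1]$ exactly when $0\le\theta\le\theta_0$. Then
\[
\Bigl(\tfrac{\cos^s\theta}{a^s}+\tfrac{\sin^s\theta}{b^s}\Bigr)^{-(s+1)/s}
=\Bigl(\tfrac{\cos^s\theta}{a^s}\Bigr)^{-\alpha}(1+w)^{-\alpha},
\]
and the square-root factor becomes $\frac{\cos^{s-1}\theta}{a^s}\bigl(1+(a/b)^2w^{2(s-1)/s}\bigr)^{1/2}$. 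On the sector above $\theta_0$ I will factor out the sine terms and use the reciprocal variable. The Jacobian $d\theta$ in terms of $dw$ combines with the leftover powers of $\cos\theta$ to give a pure power of $w$, and this is what produces the prefactors $4a/s$ and $2b/|s-1|$.

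In each sector exactly one of the two factors will be kept in closed form and the other expanded binomially. For the first sum, I keep $(1+t)^{-\alpha}$ exact and expand the square root as $\sum_k\binom{1/2}{k}x^k=\sum_k\frac{(-1)^k(-1/2)_k}{k!}x^k$. The small quantity is $x=(b/a)^2t^{2(s-1)/s}$, with the variable oriented so that $t\in[0,1]$ and $x\le (b/a)^2\le1$. The $k$-th term is then a multiple of $\int_0^1 t^{v_k-1}(1+t)^{-\alpha}\,dt=\frac1{v_k}\,{}_2F_1(\alpha,v_k;v_k+1;-1)$. Here I use Euler's integral ${}_2F_1(A,v;v+1;z)=v\int_0^1t^{v-1}(1-zt)^{-A}\,dt$, and matching exponents should give $v_k^{\pm}$. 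For the second sum, I keep the square root exact in the form $(1+(a^2/b^2)t)^{1/2}$ after a further power substitution $t=w^{2(s-1)/s}$ or its reciprocal. I expand $(1+w)^{-\alpha}=\sum_k\frac{(-1)^k(\alpha)_k}{k!}w^k$, and each term gives $\frac1{u_k}{}_2F_1(-1/2,u_k;u_k+1;-a^2/b^2)$ with $u_k^{\pm}$.

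The two branches arise because the exponent $2(s-1)/s$ changes sign at $s=1$. For $0<s<1$ one must invert the variable to keep the expansion parameter in $[0,1]$, which explains why $v_k^-$ and $u_k^-$ involve $1-s$ and why the prefactor $2b/(1-s)$ appears. I would treat both cases in parallel, and fix the exact powers of $w$ by verifying the constant term $k=0$ against a direct computation.

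The main obstacle is justifying termwise integration. The expansion of $(1+w)^{-\alpha}$ reaches the boundary $w=1$ of its disc of convergence at $\theta=\theta_0$. For $s>1$, the coefficients $1/u_k^+$ decay like $1/k$ and the ${}_2F_1$ values are bounded, so I would show that the resulting series is alternating with terms decreasing to $0$. Convergence then follows from the Leibniz test, and equality follows from Abel's limit theorem applied to the partial integrals over $[0,\rho]$ as $\rho\to1^-$, using dominated convergence for $\rho<1$. The square-root expansion causes no difficulty, since $(b/a)^2\le1$ and $|(-1/2)_k/k!|\sim k^{-3/2}$ give absolute convergence. For $0<s<1$, the growth $(\alpha)_k/k!\sim k^{\alpha-1}$ with $\alpha>2$ outweighs $1/u_k^-$, so the series diverges. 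I would therefore insert a factor $\rho^k$, integrate termwise for $\rho<1$ where the series converges uniformly, and let $\rho\to1^-$ by monotone or dominated convergence of the integrand. This yields exactly the Abel-regularized equality asserted in (B).
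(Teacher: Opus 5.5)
Your proposal is correct and follows the paper's proof. The steps match: the split at $\theta_0$, the reduction of each sector to $[0,1]$, expanding the square root in one sector and $(1+t^s)^{-\alpha}$ in the other, and termwise evaluation via Euler's integral. Your ``inversion'' for $0<s<1$ is the paper's extraction of the singular factor $t^{2(s-1)}$ from the square root, and your Leibniz/Abel justification of the interchange is more explicit than the paper's, which only gestures at it here. Two minor fixes: for $s>1$ the terms decay only like $k^{1/s-1}$ because $(\alpha)_k/k!\sim k^{1/s}$, so eventual monotonicity needs a ratio estimate as in Section 4.1; and Abel's theorem should be applied to the power series in $\epsilon$ obtained from $\int(1+\epsilon w)^{-\alpha}(\cdots)\,dw$, not to integrals truncated at $\rho$.
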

The case $s=1$ is excluded from the statement and is recovered separately by taking the appropriate limiting value.

\begin{proof}
As established in Remark~2.7, the transition angle $\theta_0$ provides a natural partition of the first-quadrant arc. Hence, using Theorem~3.1, we split the perimeter integral over the intervals $[0,\theta_0]$ and $[\theta_0,\pi/2]$. In the first interval, we introduce
\begin{equation}
t_1=(b/a)^{-1}\tan\theta,
\end{equation}
whereas in the second interval we use
\begin{equation}
t_2=(b/a)\cot\theta.
\end{equation}
Both substitutions map the corresponding intervals onto $[0,1]$; in the second case, the orientation is reversed and the limits are interchanged accordingly. Thus, the perimeter can be written in the normalized form
\begin{equation}
L_{(s)}
=
4b\int_0^1
\frac{\sqrt{1+(b/a)^{-2}t_1^{2(s-1)}}}
{(1+t_1^s)^{(s+1)/s}}
\,dt_1
+
4a\int_0^1
\frac{\sqrt{1+(b/a)^2t_2^{2(s-1)}}}
{(1+t_2^s)^{(s+1)/s}}
\,dt_2 .
\end{equation}

We now distinguish the two analytic regimes $s>1$ and $0<s<1$, since the admissible binomial expansions depend on the sign of $s-1$.

\medskip

\noindent\textbf{A. Positive branch: $\mathbf{s>1}$.}

In this regime, the powers $t_1^{2(s-1)}$ and $t_2^{2(s-1)}$ are regular at the origin. We expand only the admissible factors using the generalized binomial theorem
\begin{equation}
(1+z)^\lambda
=
\sum_{k=0}^{\infty}
\frac{(-1)^k(-\lambda)_k}{k!}z^k,
\qquad |z|<1.
\end{equation}
Term-by-term integration is first justified in the corresponding domains of convergence of the binomial expansions; outside these domains, the resulting hypergeometric factors are understood by analytic continuation, as discussed in Appendix~\ref{AppendixA}.

After applying the admissible binomial expansions and using the changes of variables
\begin{equation}
z_1=t_1^{2(s-1)},
\qquad
z_2=t_2^{s},
\end{equation}
we obtain
\begin{equation}
\begin{aligned}
L^+_{(s)}
={}&
\frac{2b}{s-1}
\sum_{k=0}^{\infty}
\frac{(-1)^k(\alpha)_k}{k!}
\int_0^1
z_1^{u_k^+-1}
\left(1+\frac{a^2}{b^2}z_1\right)^{1/2}
\,dz_1
\\
&+
\frac{4a}{s}
\sum_{k=0}^{\infty}
\frac{(-1)^k(-1/2)_k}{k!}
\left(\frac{b}{a}\right)^{2k}
\int_0^1
z_2^{v_k^+-1}
(1+z_2)^{-\alpha}
\,dz_2 .
\end{aligned}
\end{equation}
Using the Euler-type integral representations
\begin{equation}
\int_0^1 z^{p-1}(1+\beta z)^{1/2}\,dz
=
\frac{1}{p}\,
{}_2F_1\left(-\frac12,p;p+1;-\beta\right),
\end{equation}
and
\begin{equation}
\int_0^1 z^{p-1}(1+z)^{-\alpha}\,dz
=
\frac{1}{p}\,
{}_2F_1(\alpha,p;p+1;-1),
\end{equation}
the preceding expression reduces exactly to the positive branch stated in the theorem.

\medskip

\noindent\textbf{B. Negative branch: $\mathbf{0<s<1}$.}

For $0<s<1$, the powers $t_1^{2(s-1)}$ and $t_2^{2(s-1)}$ are singular at the origin. Hence, before applying the binomial expansion, the singular factors must be extracted:
\begin{equation}
1+(b/a)^{-2}t_1^{2(s-1)}
=
(b/a)^{-2}t_1^{2(s-1)}
\left[
1+(b/a)^2t_1^{2(1-s)}
\right],
\end{equation}
and
\begin{equation}
1+(b/a)^2t_2^{2(s-1)}
=
(b/a)^2t_2^{2(s-1)}
\left[
1+(b/a)^{-2}t_2^{2(1-s)}
\right].
\end{equation}
After this factorization, the remaining factors are regular at the origin. Expanding the admissible factors and using
\begin{equation}
z_1=t_1^s,
\qquad
z_2=t_2^{2(1-s)},
\end{equation}
we obtain
\begin{equation}
\begin{aligned}
L^-_{(s)}
={}&
\frac{4a}{s}
\sum_{k=0}^{\infty}
\frac{(-1)^k(-1/2)_k}{k!}
\left(\frac{b}{a}\right)^{2k}
\int_0^1
z_1^{v_k^--1}
(1+z_1)^{-\alpha}
\,dz_1
\\
&+
\frac{2b}{1-s}
\sum_{k=0}^{\infty}
\frac{(-1)^k(\alpha)_k}{k!}
\int_0^1
z_2^{u_k^--1}
\left(1+\frac{a^2}{b^2}z_2\right)^{1/2}
\,dz_2 .
\end{aligned}
\end{equation}
The same Euler-type integral representations used above transform these integrals into Gauss hypergeometric functions. Therefore, the preceding expression gives precisely the negative branch stated in the theorem. Since this branch involves a binomial expansion whose resulting series is not, in general, convergent in the ordinary sense, the identity is understood in the Abel-regularized sense.

This completes the proof.
\end{proof}

\subsection{Reduction to
the supercircle ($\boldsymbol{a=b}$).}

\begin{corollary}[Supercircle case]
    For $a=b$, both branches of Theorem \ref{Thm-3.2} reduce to the supercircle
formulas
\begin{equation}\label{Eq-supercircle-positive-final}
    L_{(s)}^{+}
    =
    \frac{8a}{s}
    \sum_{j=0}^{\infty}
    \frac{(-1)^j(-1/2)_j}{j!\,v_j^{+}}\,
    {}_2F_1
    \left(
        \alpha,v_j^{+};v_j^{+}+1;-1
    \right),
    \qquad s>1
\end{equation}
and
\begin{equation}\label{Eq-supercircle-negative-final}
    L_{(s)}^{-}
    =
    \frac{8a}{s}
    \sum_{j=0}^{\infty}
    \frac{(-1)^j(-1/2)_j}{j!\,v_j^{-}}\,
    {}_2F_1
    \left(
        \alpha,v_j^{-};v_j^{-}+1;-1
    \right),
    \qquad 0<s<1.
\end{equation}
Here the notation is the same as in Theorem~\ref{Thm-3.2}. These expressions coincide with the known perimeter formulas for the Lam\'e supercircle reported in \cite{MontillaRodriguezLinares2026}.
\end{corollary}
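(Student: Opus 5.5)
Setting $a=b$ in Theorem~\ref{Thm-3.2} means $b/a=1$, so the factor $(b/a)^{2k}$ in the first sum disappears. In the second sum the argument of ${}_2F_1$ becomes $-a^2/b^2=-1$. The plan is to show, branch by branch, that the second sum equals the first sum. Then the prefactor $4a/s$ doubles to $8a/s$ and the stated formulas follow. The cleanest way to see this is at the integral level rather than through hypergeometric identities. In the proof of Theorem~\ref{Thm-3.2} the perimeter was split at $\theta_0=\arctan(b/a)$, which equals $\pi/4$ when $a=b$. The substitutions $t_1=\tan\theta$ and $t_2=\cot\theta$ then give
\begin{equation*}
L_{(s)}=4a\int_0^1\frac{\sqrt{1+t_1^{2(s-1)}}}{(1+t_1^s)^{(s+1)/s}}\,dt_1+4a\int_0^1\frac{\sqrt{1+t_2^{2(s-1)}}}{(1+t_2^s)^{(s+1)/s}}\,dt_2 .
\end{equation*}
The two integrals are literally identical, which reflects the reflection symmetry $\theta\mapsto\pi/2-\theta$ of the supercircle. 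So $L_{(s)}=8a\,I_s$, where $I_s$ is either one of them.

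The remaining task is to check that expanding $I_s$ in the way that produced the first series of Theorem~\ref{Thm-3.2} gives exactly the displayed sums. For $s>1$, expand $\sqrt{1+t^{2(s-1)}}=\sum_k (-1)^k(-1/2)_k t^{2k(s-1)}/k!$ and substitute $z=t^s$. The $k$-th term becomes $\frac1s\int_0^1 z^{v_k^+-1}(1+z)^{-\alpha}\,dz$, and by the Euler-type representation already used in the proof this equals $\frac{1}{s\,v_k^+}{}_2F_1(\alpha,v_k^+;v_k^++1;-1)$. That gives \eqref{Eq-supercircle-positive-final}. For $0<s<1$, first factor out $t^{s-1}$:
\begin{equation*}
\sqrt{1+t^{2(s-1)}}=t^{s-1}\sqrt{1+t^{2(1-s)}} .
\end{equation*}
Expanding and substituting $z=t^s$ produces the exponent $z^{v_k^--1}$ with $v_k^-=1+2k(1-s)/s$. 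This yields \eqref{Eq-supercircle-negative-final}, in the same Abel-regularized sense as part (B) of Theorem~\ref{Thm-3.2}.

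For completeness I would also verify the series-level statement directly. The aim is to show that, at $a=b$, the second series of Theorem~\ref{Thm-3.2} equals the first. Both series come from the same integral $I_s$, once with the binomial expansion applied to $(1+t^s)^{-\alpha}$ and once with it applied to the square root. The main obstacle is justifying that the two expansions give the same value. That requires exchanging sum and integral at the endpoint $t=1$, where the binomial series sits on its circle of convergence. For $s>1$ I would handle this with Abel's theorem: insert a factor $x^k$, interchange for $x<1$, and let $x\to1^-$, using the alternating convergence of the series (the Leibniz test). For $0<s<1$ the equality holds by definition of the Abel-regularized value, so no more is needed than in Theorem~\ref{Thm-3.2}. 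Finally, I would note agreement with \cite{MontillaRodriguezLinares2026} by comparing parameters.
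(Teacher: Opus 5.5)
Your argument is correct, but it takes a different route from the paper. The paper works entirely at the level of the series. It expands the factor ${}_2F_1(-1/2,u_k^{+};u_k^{+}+1;-1)$ of the second contribution as a Gauss series and uses the kernel identity $\frac{1}{u_k^{+}}\frac{(u_k^{+})_j}{(u_k^{+}+1)_j}=\frac{2(s-1)}{s}\frac{1}{v_j^{+}}\frac{(v_j^{+})_k}{(v_j^{+}+1)_k}$; both sides equal $2(s-1)/(sk+1+2j(s-1))$. It then swaps the two sums so that the second contribution becomes the first, and for $0<s<1$ it repeats this with $-1$ replaced by $-\varepsilon$ and passes to the Abel limit.

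You instead go back to the integral form in the proof of Theorem~\ref{Thm-3.2}. At $a=b$ the partition angle is $\pi/4$, the two quarter-arc integrals coincide, so $L_{(s)}=8aI_s$, and you expand $I_s$ only through the binomial series of the square root. That series has absolutely summable coefficients, is evaluated at a variable in $[0,1]$, and multiplies an integrable factor, so term-by-term integration is immediate in both regimes. This actually shows that the outer sums in \eqref{Eq-supercircle-positive-final} and \eqref{Eq-supercircle-negative-final} converge absolutely. Only the inner factor ${}_2F_1(\alpha,v_j^{-};v_j^{-}+1;-1)$ needs analytic continuation when $0<s<1$, so you get more than the Abel-sense statement you settle for.

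The equality of the two contributions of Theorem~\ref{Thm-3.2} then follows because both are expansions of the same integral. Your Abel-theorem step for $s>1$ is also exactly the justification the paper's rearrangement needs but does not supply. The double series it interchanges is not absolutely convergent for $s>1$: already its $j=0$ row has terms $(\alpha)_k/(k!\,u_k^{+})\asymp k^{1/s-1}$.

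For the Leibniz part of that step, you still have to check eventual monotonicity of the amplitudes $\frac{(\alpha)_k}{k!}\int_0^1 t^{sk}\sqrt{1+t^{2(s-1)}}\,dt$. Their consecutive ratio is $\bigl(1-\frac{1-1/s}{k+1}\bigr)\bigl(1+O(k^{-2})\bigr)$, as in Section~\ref{Subsection-positive-convergence}. What the paper's computation gives in return is an identity intrinsic to the hypergeometric series, showing the $u\leftrightarrow v$ symmetry of the kernel without reference to the integral.
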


\begin{proof}
We first consider the positive branch. Write $L_{(s)}^{+}=A_{(s)}^{+}+B_{(s)}^{+}$, where $A_{(s)}^{+}$ and $B_{(s)}^{+}$ denote, respectively, the first and second contributions in Theorem \ref{Thm-3.2}. Setting $a=b$ in the first contribution gives
\begin{equation}
A_{(s)}^{+}
=
\frac{4a}{s}
\sum_{j=0}^{\infty}
\frac{(-1)^j(-1/2)_j}{j!\,v_j^{+}}
\,{}_2F_1\!\left(\alpha,v_j^{+};v_j^{+}+1;-1\right).
\end{equation}
For the second contribution, the same specialization gives
\begin{equation}
B_{(s)}^{+}
=
\frac{2a}{s-1}
\sum_{k=0}^{\infty}
\frac{(-1)^k(\alpha)_k}{k!\,u_k^{+}}
\,{}_2F_1\!\left(-\frac12,u_k^{+};u_k^{+}+1;-1\right).
\end{equation}
Using the Gauss expansion, we obtain
\begin{equation}
B_{(s)}^{+}
=
\frac{2a}{s-1}
\sum_{k=0}^{\infty}
\frac{(-1)^k(\alpha)_k}{k!\,u_k^{+}}
\sum_{j=0}^{\infty}
\frac{(-1/2)_j(u_k^{+})_j}{(u_k^{+}+1)_j\,j!}
(-1)^j .
\end{equation}
From the definitions of $u_k^{+}$ and $v_j^{+}$, one has
\begin{equation}
\frac{1}{u_k^{+}}
\frac{(u_k^{+})_j}{(u_k^{+}+1)_j}
=
\frac{2(s-1)}{s}
\frac{1}{v_j^{+}}
\frac{(v_j^{+})_k}{(v_j^{+}+1)_k}.
\end{equation}
Substituting this identity into the preceding expression and rearranging the sums yields
\begin{equation}
B_{(s)}^{+}
=
\frac{4a}{s}
\sum_{j=0}^{\infty}
\frac{(-1)^j(-1/2)_j}{j!\,v_j^{+}}
\sum_{k=0}^{\infty}
\frac{(\alpha)_k(v_j^{+})_k}{(v_j^{+}+1)_k\,k!}
(-1)^k .
\end{equation}
The inner sum satisfies
\begin{equation}
\sum_{k=0}^{\infty}
\frac{(\alpha)_k(v_j^{+})_k}{(v_j^{+}+1)_k\,k!}
(-1)^k
=
{}_2F_1\!\left(\alpha,v_j^{+};v_j^{+}+1;-1\right).
\end{equation}
Therefore, $B_{(s)}^{+}=A_{(s)}^{+}$. Consequently,
\begin{equation}
L_{(s)}^{+}
=
A_{(s)}^{+}+B_{(s)}^{+}
=
2A_{(s)}^{+},
\end{equation}
which is precisely the positive supercircle formula stated in the corollary.

For the negative branch, write $L_{(s)}^{-}=A_{(s)}^{-}+B_{(s)}^{-}$. Setting $a=b$ in the first contribution gives
\begin{equation}
A_{(s)}^{-}
=
\frac{4a}{s}
\sum_{j=0}^{\infty}
\frac{(-1)^j(-1/2)_j}{j!\,v_j^{-}}
\,{}_2F_1\!\left(\alpha,v_j^{-};v_j^{-}+1;-1\right).
\end{equation}

The second contribution is analogous, but the boundary value of the resulting
Gauss series has to be specified. After setting $a=b$ and rearranging the
double series, the inner hypergeometric series is evaluated at $z=-1$, which
lies on the boundary of the disk of convergence. Hence, absolute convergence
and the interchange of the sums are not justified a priori for the whole range
$0<s<1$. We therefore compute the expression first at $z=-\varepsilon$, with
$0<\varepsilon<1$, where the Gauss series is absolutely convergent, and then
take the Abel limit $\varepsilon\to1^{-}$. Thus, we introduce
\begin{equation}
B_{(s)}^{-}(\varepsilon)
=
\frac{4a}{s}
\sum_{j=0}^{\infty}
\frac{(-1)^j(-1/2)_j}{j!\,v_j^{-}}
\,{}_2F_1\!\left(\alpha,v_j^{-};v_j^{-}+1;-\varepsilon\right).
\end{equation}
The same rearrangement used for the positive branch is then justified for
$0<\varepsilon<1$. Passing to the Abel limit gives
\begin{equation}
B_{(s)}^{-}
=
\lim_{\varepsilon\to1^{-}}B_{(s)}^{-}(\varepsilon)
=
A_{(s)}^{-},
\end{equation}
where the equality is understood in the Abel sense. Consequently,
\begin{equation}
L_{(s)}^{-}
=
A_{(s)}^{-}+B_{(s)}^{-}
=
2A_{(s)}^{-},
\end{equation}
which is precisely the negative supercircle formula stated in the corollary.

Thus, in the isotropic case $a=b$, both branches reduce consistently to the corresponding supercircle formulas.
\end{proof}

The preceding corollary shows that the hypergeometric formulation of
Theorem~\ref{Thm-3.2} is consistent with the isotropic limit. In fact, when $a=b$, the
two branches obtained for the general superellipse reduce exactly to the known
supercircle expressions. This agreement provides a natural check on the
structure of the formulas derived above.

\subsection{Convergence of the hypergeometric factors in the branches.}\label{subsection3.2}

Returning to Theorem~\ref{Thm-3.2}, we now examine the convergence of the
hypergeometric factors appearing in the branch representations. Since some of these
factors are evaluated at the boundary point $z=-1$, while others require
analytic continuation outside the disk of convergence of the Gauss series,
we record the relevant facts in the following proposition.
\begin{proposition}[Convergence of the hypergeometric factors]\label{Proposition3.4}
    Let $\alpha=1+1/s$ and let $u_k^\pm$, $v_k^\pm$ be defined as in (\ref{ParameterS>1}) and (\ref{Eq-parameters-minus}), respectively. Then, for every
$k\geq 0$, the following statements hold.
    \begin{itemize}
        \item[(i)] The hypergeometric factor ${}_2F_1(\alpha,v_k^+;v_k^++1;-1)$  converges conditionally for $s>1$.
        \item[(ii)]  The hypergeometric factor ${}_2F_1(\alpha,v_k^-;v_k^-+1;-1)$ diverges as an ordinary Gauss series for $0<s<1$.
        \item[(iii)]   The hypergeometric factor ${}_2F_1(-1/2,u_k^\pm;u_k^\pm+1;-a^2/b^2)$ is understood by analytic continuation, for instance through Pfaff's
    transformation. 
    \end{itemize}
\end{proposition}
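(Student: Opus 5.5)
The plan is to reduce (i) and (ii) to the classical boundary behaviour of the Gauss series on the unit circle. I would compute the parameter excess $c-a-b$ and the asymptotics of the general term directly, rather than quote the criterion as a black box. Part (iii) will be handled by Pfaff's transformation together with the Euler integral already used in the proof of Theorem~\ref{Thm-3.2}.

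For (i) and (ii), I would first note that $v_k^\pm>0$ for every $k\ge0$: this holds for $v_k^+$ because $s>1$, and for $v_k^-$ because $0<s<1$. Hence all Pochhammer symbols are nonvanishing. For ${}_2F_1(\alpha,v;v+1;-1)$ with $v=v_k^\pm$, the $n$-th term is
\begin{equation*}
(-1)^n\,\frac{(\alpha)_n (v)_n}{(v+1)_n\, n!}=(-1)^n\,\frac{v}{v+n}\,\frac{(\alpha)_n}{n!},
\end{equation*}
using $(v)_n/(v+1)_n=v/(v+n)$. By the standard Gamma-ratio asymptotics $(\alpha)_n/n!\sim n^{\alpha-1}/\Gamma(\alpha)$, its modulus behaves like $v\,n^{\alpha-2}/\Gamma(\alpha)=v\,n^{1/s-1}/\Gamma(\alpha)$. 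Equivalently, $c-a-b=1-\alpha=-1/s$.

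For $s>1$ the exponent $1/s-1$ lies in $(-1,0)$. The terms therefore tend to zero, but $\sum n^{1/s-1}$ diverges, so the series is not absolutely convergent. Convergence will then follow from the Leibniz test, provided the moduli are eventually decreasing. To check this, I would compute the ratio of consecutive moduli, $(\alpha+n)(v+n)/\big((v+1+n)(n+1)\big)$. Cross-multiplying shows the difference of numerator and denominator equals $v/s-1-n(1-1/s)$, which is negative for $n>(v/s-1)/(1-1/s)$. Hence the moduli decrease eventually and (i) holds.

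For $0<s<1$ the exponent $1/s-1$ is positive, so the terms do not tend to zero, indeed they grow. The ordinary Gauss series therefore diverges, which gives (ii).

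For (iii), the argument $-a^2/b^2$ has modulus $\ge1$ since $a\ge b$. When $a=b$ the point $z=-1$ lies on the boundary and $c-a-b=3/2>0$, so the series converges absolutely there. When $a>b$ I would apply Pfaff's transformation:
\begin{equation*}
{}_2F_1\!\left(-\tfrac12,u;u+1;-\beta\right)=(1+\beta)^{1/2}\,{}_2F_1\!\left(-\tfrac12,1;u+1;\tfrac{\beta}{1+\beta}\right),\qquad \beta=a^2/b^2 .
\end{equation*}
The new argument lies in $[1/2,1)$, so the right-hand series converges, and it agrees with the Euler integral $u\int_0^1 z^{u-1}(1+\beta z)^{1/2}dz$, which is analytic in $\beta>-1$. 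This identifies the analytic continuation used in the theorem.

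The only step needing care is the eventual monotonicity in (i), which the explicit ratio computation settles. The remaining steps are standard asymptotics.
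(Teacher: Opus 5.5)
Your proposal is correct and takes essentially the same route as the paper. Both arguments rest on the excess $c-a-b=1-\alpha=-1/s$ for (i)--(ii), and on Pfaff's transformation to an argument in $(0,1)$ for (iii). The one difference is that the paper quotes the unit-circle convergence criterion from Appendix~A, whereas you verify it directly. You do this through the term asymptotics $v\,n^{1/s-1}/\Gamma(\alpha)$ and the explicit ratio check for the Leibniz test, and you also tie the continuation to the Euler integral from the proof of Theorem~\ref{Thm-3.2}; that last step tacitly uses $u_k^\pm>0$, which holds in both regimes.
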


\begin{proof}
We first analyze the hypergeometric factors evaluated at the boundary point
$z=-1$. Let $\text{a}=\alpha$,  $\text{b}=v_k^\pm$, and $\text{c}=v_k^\pm+1$. Then $\operatorname{Re}(\text{c}-\text{a}-\text{b})=-{1}/{s}$.

Since $z=-1$ satisfies $|z|=1$ and $z\neq 1$, the boundary convergence
criterion for the Gauss series, recalled in Appendix~\ref{AppendixA},
item~\ref{Ap4}, applies. Hence, the series converges
conditionally when
\begin{equation}
-1<\operatorname{Re}(\text{c}-\text{a}-\text{b})\leq 0,
\end{equation}
and diverges when
\begin{equation}
\operatorname{Re}(\text{c}-\text{a}-\text{b})\leq -1.
\end{equation}

For the positive branch, $s>1$, and therefore $-1<-{1}/{s}<0$. It follows that ${}_2F_1\!\left(\alpha,v_k^{+};v_k^{+}+1;-1\right)$ converges conditionally. This proves \textnormal{(i)}.

For the negative branch, $0<s<1$, and thus $-{1}/{s}<-1.$
Consequently, the defining Gauss series of
${}_2F_1\!\left(\alpha,v_k^{-};v_k^{-}+1;-1\right)$
diverges as an ordinary power series. This proves \textnormal{(ii)}. 

This statement concerns only the power-series representation at the boundary point
$z=-1$; it does not imply that the corresponding hypergeometric function is
meaningless under analytic continuation or Abel regularization.

It remains to consider the factors
${}_2F_1\!\left(-1/2,u_k^{\pm};u_k^{\pm}+1;-{a^2}/{b^2}\right)$. Set $z={a^2}/b^2$. For $z>0$, Pfaff's transformation (\ref{Pfaff}) gives
\begin{equation}
{}_2F_1\!\left(-\frac12,u_k^{\pm};u_k^{\pm}+1;-z\right)
=
(1+z)^{1/2}
{}_2F_1\!\left(
-\frac12,1;u_k^{\pm}+1;\frac{z}{1+z}
\right).
\end{equation}
Since
\begin{equation}
0<\frac{z}{1+z}<1,
\end{equation}
the transformed hypergeometric function has an absolutely convergent
power-series representation, by \eqref{2F1-def-Serie}. Moreover, from the
definitions of $u_k^{\pm}$, one has $u_k^{\pm}>0$ for $k\geq0$ in the
corresponding regimes. Hence
\[
\operatorname{Re}(u_k^{\pm}+1)>\operatorname{Re}(1)>0,
\]
so Euler's integral representation \eqref{2F1-def-Integr} also applies to
the transformed hypergeometric function.

Therefore, although the original argument $-a^2/b^2$ may lie outside the disk
of convergence of the defining Gauss series, Pfaff's transformation rewrites
the factor in terms of an equivalent hypergeometric expression whose argument
lies in the open unit interval. This proves \textnormal{(iii)}.
\end{proof}

The preceding proposition applies only to the individual Gauss hypergeometric factors appearing in (\ref{L-final-ngreater1}) and (\ref{L-final-nless1}), and not to the complete perimeter expansions themselves. Since the latter are series whose terms involve these factors, their convergence requires an independent analysis. This analysis is carried out in the next section, after rewriting the solution in a form that makes explicit the symmetry under interchange of the semi-axes.

\section{Symmetric
representation and convergence.}\label{Section4}

Under the rotation $\theta\mapsto\theta+\pi/2$, the Lamé curve is transformed
into the same curve with the semi-axes interchanged. Indeed,
\begin{equation}
r_{a,b}\left(\theta+\frac{\pi}{2}\right)
=\left(\frac{|\sin\theta|^s}{a^s}
+\frac{|\cos\theta|^s}{b^s}
\right)^{-1/s}
=r_{b,a}(\theta).
\label{Eq-rotation-exchange}
\end{equation}
Since arc length is invariant under rigid rotations, the perimeter satisfies $L_{(s)}(a,b)=L_{(s)}(b,a)$.
Thus, it is enough to assume $a\geq b>0$. The branch representations \eqref{L-final-ngreater1} and \eqref{L-final-nless1}, obtained in
Theorem~\ref{Thm-3.2}, are not manifestly symmetric under the
interchange of the semi-axes because some hypergeometric factors
appear with arguments such as $-b^2/a^2$ and $-a^2/b^2$. In this section, we first rewrite them in a symmetric
form and then use the resulting representation to study the convergence of the corresponding outer series.

As established in
Proposition~\ref{Proposition3.4}, item (iii), the latter type of
factor is not represented by a convergent Gauss series in its original form, but
it admits an equivalent Pfaff-transformed representation whose argument lies
inside the unit disk. This is precisely the transformation needed below.

Using the Gauss expansion collected in
Appendix~\ref{AppendixA}, the non-manifestly symmetric contribution can be written as
\begin{equation}
\begin{aligned}
\sum_{k=0}^\infty
\frac{(-1)^k (-1/2)_k}{k!v^\pm_k}\left(\frac{b}{a}\right)^{{2k}} &
{}_2F_1\left(\alpha,v^\pm_k;v^\pm_k+1;-1\right)\\ &=\frac{\pm s}{2(s-1)}
\sum_{k=0}^{\infty}
\frac{(-1)^k(\alpha)_k}{k!\,u_k^{\pm}}\,
{}_2F_1\left(
-\frac{1}{2},u_k^{\pm};u_k^{\pm}+1;
-\frac{b^2}{a^2}
\right),
\end{aligned}
\label{Eq-nonmanifest-term}
\end{equation}
where the upper sign corresponds to $s>1$ and the lower sign to $0<s<1$.
For $s>1$, the rearrangement leading to \eqref{Eq-nonmanifest-term} is
understood in the ordinary sense. For $0<s<1$, it is understood in the Abel
sense, replacing $(-1)^k$ by $(-\varepsilon)^k$, with $0<\varepsilon<1$, performing
the rearrangement in the convergent regime, and then taking
$\varepsilon\to1^{-}$. Substituting \eqref{Eq-nonmanifest-term} into the corresponding branch formula
of Theorem~\ref{Thm-3.2} gives
\begin{equation}
L_{(s)}^{\pm}
=
\frac{2}{\pm(s-1)}
\sum_{k=0}^{\infty}
\frac{(-1)^k(\alpha)_k}{k!\,u_k^{\pm}}
\left[
a\,{}_2F_1\left(
-\frac{1}{2},u_k^{\pm};u_k^{\pm}+1;
-\frac{b^2}{a^2}
\right)
+
b\,{}_2F_1\left(
-\frac{1}{2},u_k^{\pm};u_k^{\pm}+1;
-\frac{a^2}{b^2}
\right)
\right].
\label{eq:non-manifest-branch}
\end{equation}
Although \eqref{eq:non-manifest-branch} is symmetric under $a\leftrightarrow b$,
one of its hypergeometric arguments may lie outside the unit disk. Applying
Pfaff's transformation, as justified in
Proposition~\ref{Proposition3.4}-(iii), yields the following
representation.

\begin{theorem}[Symmetric representation]
For $s>0$, $s\neq1$, and $a\geq b>0$, the perimeter satisfies
\[
L_{(s)}(a,b)=L_{(s)}(b,a),
\]
and admits the symmetric representation
\begin{equation}
L_{(s)}^{\pm}
=\frac{2\sqrt{a^2+b^2}}{\pm(s-1)}
\sum_{k=0}^{\infty}
\frac{(-1)^k(\alpha)_k}{k!\,u_k^{\pm}}
\left[{}_2F_1\left(
-\frac{1}{2},1;u_k^{\pm}+1;
\frac{b^2}{a^2+b^2}\right)
+{}_2F_1\left(
-\frac{1}{2},1;u_k^{\pm}+1;
\frac{a^2}{a^2+b^2}\right)\right].
\label{Eq-symmetric-after-Pfaff}
\end{equation}
Moreover,
\begin{equation}
0<
\frac{b^2}{a^2+b^2}
\leq
\frac{1}{2}
\leq
\frac{a^2}{a^2+b^2}
<1.
\label{eq:transformed-arguments}
\end{equation}
Hence, for each fixed $k$, the hypergeometric factors in
\eqref{Eq-symmetric-after-Pfaff} are represented by absolutely convergent
Gauss series.
\end{theorem}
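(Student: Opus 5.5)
The plan is to start from the non-manifestly symmetric form \eqref{eq:non-manifest-branch}, which the paper has already derived from Theorem~\ref{Thm-3.2} via the rearrangement \eqref{Eq-nonmanifest-term}. That rearrangement is taken in the ordinary sense for $s>1$ and in the Abel sense for $0<s<1$. The symmetry $L_{(s)}(a,b)=L_{(s)}(b,a)$ needs no separate argument here: it already follows from the rotation identity \eqref{Eq-rotation-exchange} and the invariance of arc length. It is also visible directly in \eqref{eq:non-manifest-branch}, since the bracket is invariant under $a\leftrightarrow b$. What remains is to rewrite each bracketed term in Pfaff form and to verify the bounds \eqref{eq:transformed-arguments}.

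For the rewriting, apply Pfaff's transformation ${}_2F_1(A,B;C;-z)=(1+z)^{-A}\,{}_2F_1\bigl(A,C-B;C;\tfrac{z}{1+z}\bigr)$ with $A=-1/2$, $B=u_k^\pm$, $C=u_k^\pm+1$, so that $C-B=1$. This is exactly the identity used in the proof of Proposition~\ref{Proposition3.4}(iii).

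\textbf{Second term.} With $z=a^2/b^2$ it gives $b\,(1+a^2/b^2)^{1/2}\,{}_2F_1\bigl(-\tfrac12,1;u_k^\pm+1;\tfrac{a^2}{a^2+b^2}\bigr)=\sqrt{a^2+b^2}\,{}_2F_1(\cdots)$.

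\textbf{First term.} With $z=b^2/a^2$ it gives $a\,(1+b^2/a^2)^{1/2}=\sqrt{a^2+b^2}$ times ${}_2F_1\bigl(-\tfrac12,1;u_k^\pm+1;\tfrac{b^2}{a^2+b^2}\bigr)$.

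Factoring out $\sqrt{a^2+b^2}$ then yields \eqref{Eq-symmetric-after-Pfaff} term by term. The Pfaff identity is valid for $z>0$ because both sides are analytic on $\mathbb{C}\setminus[1,\infty)$ in the variable $-z$ and agree near the origin. It is also needed that $u_k^\pm>0$, so that $C=u_k^\pm+1$ is not a nonpositive integer; this follows from the definitions of $u_k^\pm$ in the respective regimes.

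The bounds \eqref{eq:transformed-arguments} follow immediately from $a\ge b>0$: the two arguments are positive, they sum to $1$, and the smaller one, $b^2/(a^2+b^2)$, is at most $1/2$. Absolute convergence of each Gauss series for fixed $k$ follows from the ratio test, since every argument lies strictly inside $(0,1)$.

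The main obstacle is legitimacy at the level of the outer series rather than the termwise identity. For $s>1$ the substitution is termwise inside a sum that \eqref{eq:non-manifest-branch} already asserts, so equality of the sums is immediate. For $0<s<1$, \eqref{eq:non-manifest-branch} holds only in the Abel sense. I would therefore carry out the Pfaff substitution termwise inside the Abel-regularized sum, with $(-1)^k$ replaced by $(-\varepsilon)^k$, and then pass to $\varepsilon\to1^-$. Since Pfaff's identity does not involve $\varepsilon$ and is applied coefficient by coefficient, the regularized sums coincide for each $\varepsilon$, and so the Abel limits agree.
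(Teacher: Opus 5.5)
Your proposal is correct and follows essentially the same route as the paper: you start from the $a\leftrightarrow b$-symmetric form \eqref{eq:non-manifest-branch}, apply Pfaff's transformation to both factors (with $C-B=1$), absorb $a(1+b^2/a^2)^{1/2}=b(1+a^2/b^2)^{1/2}=\sqrt{a^2+b^2}$ into the prefactor, and read off the bounds on the transformed arguments from $a\ge b>0$. Your extra remarks make explicit two points the paper leaves implicit: the analytic-continuation justification of Pfaff's identity at negative real arguments, and carrying the substitution inside the $\varepsilon$-regularized sum when $0<s<1$.
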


Indeed, the inequalities in \eqref{eq:transformed-arguments} follow directly
from $a\geq b>0$. Therefore, both transformed arguments lie strictly inside the
unit disk, and the absolute convergence of each hypergeometric factor follows
from the Gauss convergence criterion recalled in Appendix~\ref{AppendixA}.

This proves the symmetric representation and the termwise convergence of its
hypergeometric factors. The convergence of the outer series in $k$ is a separate
question and is studied for the positive and negative branches in the next subsections.

\subsection{Conditional convergence of the positive branch.}\label{Subsection-positive-convergence}

\begin{theorem} For $s>1$, the series representation of $L^+_{(s)}$ in Eq.~\eqref{Eq-symmetric-after-Pfaff} converges conditionally. It does not converge absolutely.   
\end{theorem}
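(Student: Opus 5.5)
The plan is to write the $k$-th term of \eqref{Eq-symmetric-after-Pfaff} for $s>1$ as $(-1)^k c_k$, with
\[
c_k=\frac{(\alpha)_k}{k!\,u_k^{+}}\,G_k,\qquad G_k={}_2F_1\!\left(-\tfrac12,1;u_k^{+}+1;x\right)+{}_2F_1\!\left(-\tfrac12,1;u_k^{+}+1;y\right),
\]
where $x=b^2/(a^2+b^2)$ and $y=a^2/(a^2+b^2)$. The prefactor $2\sqrt{a^2+b^2}/(s-1)$ does not affect convergence. I will then apply the Leibniz test for conditional convergence, and a comparison with a $p$-series to rule out absolute convergence.

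\textbf{Asymptotics and positivity.} The first step is to note that $(\alpha)_k/k!=\Gamma(k+\alpha)/(\Gamma(\alpha)\Gamma(k+1))\sim k^{\alpha-1}/\Gamma(\alpha)=k^{1/s}/\Gamma(\alpha)$ by Stirling's formula (or the ratio $\Gamma(k+\alpha)/\Gamma(k+1)\sim k^{\alpha-1}$). Also $u_k^+=(sk+1)/(2(s-1))\sim sk/(2(s-1))$. For $G_k$, the Gauss series with parameters $(-1/2,1;c)$, $c=u_k^++1$, reads
\[
1-\frac{x}{2c}-\sum_{n\ge2}\frac{|(-1/2)_n|\,n!}{(c)_n\,n!}x^n .
\]
Every term after the first is bounded by $x^n/c$ times a constant, so $G_k=2+O(1/k)$ uniformly, since $x,y<1$ are fixed. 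Moreover, every term after the leading $1$ is negative and increases toward $0$ as $c$ increases. Hence $G_k$ is positive (it tends to $2$, and one checks $G_k\ge 2-\tfrac{1}{c}\cdot\tfrac{x+y}{1-\max(x,y)}$, positive for large $k$; small $k$ are irrelevant to convergence). Also $G_k$ is increasing in $k$ and bounded by $2$. Consequently
\[
c_k\sim \frac{2\cdot 2(s-1)}{s\,\Gamma(\alpha)}\,k^{1/s-1},
\]
and since $0<1/s<1$ this gives $c_k\to0$. Furthermore $\sum c_k$ diverges by comparison with $\sum k^{-(1-1/s)}$, whose exponent $1-1/s\in(0,1)$. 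This yields the failure of absolute convergence.

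\textbf{Monotonicity (the main obstacle).} Leibniz requires $c_k$ to be eventually nonincreasing, and here $G_k$ increases while $(\alpha)_k/(k!\,u_k^+)$ decreases, so the two effects must be compared. I would compute the ratio
\[
\frac{(\alpha)_{k+1}/((k+1)!\,u_{k+1}^+)}{(\alpha)_k/(k!\,u_k^+)}=\frac{k+\alpha}{k+1}\cdot\frac{sk+1}{sk+s+1}=1-\frac{1-1/s}{k}+O(k^{-2}).
\]
I would then show $G_{k+1}/G_k=1+O(k^{-2})$. Indeed $\partial_c G=O(1/c^2)$, since the $c$-dependence enters through $1/c$ and $1/(c)_n$, and $G_k$ is bounded below. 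Hence $c_{k+1}/c_k=1-(1-1/s)/k+O(k^{-2})<1$ for all $k\ge k_0$. This establishes eventual monotone decrease, and the Leibniz test gives convergence of the tail and hence of the whole series.

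If the derivative bound on $G$ proves awkward, the fallback is to use the Euler integral $G(c)=c\int_0^1(1-t)^{c-1}\big[(1-xt)^{1/2}+(1-yt)^{1/2}\big]dt$ and expand at $t=0$. This gives $G=2-(x+y)/(2(c+1))+O(c^{-2})$ explicitly, which yields the same $O(k^{-2})$ ratio estimate.
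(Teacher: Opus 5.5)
Your proposal is correct and follows essentially the same route as the paper. Both use the Stirling asymptotics $c_k\sim C\,k^{-(1-1/s)}$ and compare with a divergent $p$-series to rule out absolute convergence. Both then use the exact ratio $\frac{k+\alpha}{k+1}\cdot\frac{u_k^+}{u_{k+1}^+}=1-\frac{1-1/s}{k+1}$, combined with a hypergeometric quotient of size $1+O(k^{-2})$, to get eventual monotonicity and conclude by the Leibniz test.

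Your version is slightly more careful in two respects: you treat the two hypergeometric factors jointly as $G_k$, and you actually justify the $O(k^{-2})$ quotient bound, which the paper only asserts. The only blemish is notational: in your fallback Euler integral, $c$ stands for $u_k^+$ rather than the $u_k^++1$ you defined earlier, which is harmless.
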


\begin{proof}
For $s>1$, the constant prefactor in $L_{(s)}^+$ does not affect convergence, so it is sufficient to analyze the series
\begin{equation}
\sum_{k=0}^{\infty}
(-1)^k
\frac{(\alpha)_k}{k!}
\frac{1}{u_k^+}
{}_2F_1\!\left(
-\frac12,1;u_k^+ +1;
z\right).
\end{equation}
where, for clarity, we have set $z=b^2/(a^2+b^2)$ or $z=a^2/(a^2+b^2)$.

Since $u_k^+\to\infty$, the hypergeometric factor tends to $1$. Hence, the asymptotic behavior of the amplitude is determined by the product of $(\alpha)_k/k!$ and $1/u_k^+$. Using the gamma-function representation of the Pochhammer symbol
\begin{equation}
\frac{(\alpha)_k}{k!}
=
\frac{\Gamma(k+\alpha)}
{\Gamma(\alpha)\Gamma(k+1)}
\sim
\frac{k^{\alpha-1}}{\Gamma(\alpha)}
=
\frac{k^{1/s}}{\Gamma(1+1/s)}\ .
\end{equation}
Moreover,
\begin{equation}
u_k^+
=
\frac{sk+1}{2(s-1)}
\sim
\frac{s}{2(s-1)}\,k   
\end{equation}
and therefore
\begin{equation}
\frac{(\alpha)_k}{k!}
\frac{1}{u_k^+}
{}_2F_1\!\left(
-\frac12,1;u_k^+ +1;
z
\right)
\sim
\frac{2(s-1)}
{s\,\Gamma(1+1/s)}
\frac{1}{k^{(s-1)/s}}.    
\end{equation}

Since $0<(s-1)/s<1$,
the series of absolute values is asymptotic to a divergent $p$-series. Consequently, $L_{(s)}^+$ is not absolutely convergent. It remains to prove convergence in the ordinary sense. 

The preceding asymptotic estimate shows that the amplitude tends to zero. To verify eventual monotonicity, consider the ratio of two consecutive amplitudes
\begin{equation}
\frac{
\dfrac{(\alpha)_{k+1}}{(k+1)!}
\dfrac{1}{u_{k+1}^+}}{
\dfrac{(\alpha)_k}{k!}
\dfrac{1}{u_k^+}
}\frac{{}_2F_1\!\left(
-1/2,1;u_{k+1}^+ +1; z\right)}{{}_2F_1\!\left(
-1/2,1;u_k^+ +1; z\right)}
=
\left(\frac{k+\alpha}{k+1}\right)
\frac{u_k^+}{u_{k+1}^+}
\frac{{}_2F_1\!\left(
-1/2,1;u_{k+1}^+ +1; z\right)}{{}_2F_1\!\left(
-1/2,1;u_k^+ +1; z\right)}\ .
\end{equation}

Using the explicit expression for $u_k^+$, the first two factors simplify exactly as
\begin{equation}
\left(\frac{k+\alpha}{k+1}\right)
\frac{u_k^+}{u_{k+1}^+}
=
1-\frac{(1-1/s)}{k+1}\ .  
\end{equation}
For the hypergeometric quotient, the large-$u_k^+$ expansion gives
\begin{equation}
{}_2F_1\!\left(
-\frac12,1;u_k^+ +1; z \right)
\simeq
1-\frac{z}{2 u_k^+}
+
O\left(1/u_k^+\right)^2
\end{equation}
and, since $u_k^+$ grows linearly with $k$, it follows that
\begin{equation}
\frac{{}_2F_1\!\left(
-1/2,1;u_{k+1}^+ +1; z\right)}{{}_2F_1\!\left(
-1/2,1;u_k^+ +1; z\right)}
\simeq
1+O\left(1/k\right)^2\ .
\end{equation}
Consequently,
\begin{equation}
\frac{
\dfrac{(\alpha)_{k+1}}{(k+1)!}
\dfrac{1}{u_{k+1}^+}
}{
\dfrac{(\alpha)_k}{k!}
\dfrac{1}{u_k^+}
}\frac{{}_2F_1\!\left(
-1/2,1;u_{k+1}^+ +1; z\right)}{{}_2F_1\!\left(
-1/2,1;u_k^+ +1; z\right)}
\simeq
\left(
1-\frac{1-1/s}{k}
\right)
\left(
1+O\!\left(1/k\right)^2
\right)\ .
\end{equation}

Since $1-1/s>0$, the ratio is strictly smaller than $1$ for all sufficiently large $k$. Hence, the amplitudes are eventually decreasing. Therefore, the series defining $L_{(s)}^+$ is an alternating series whose amplitudes eventually decrease to zero. By the Leibniz criterion, it converges. Since absolute convergence fails, we conclude that
$L_{(s)}^+$ converges conditionally for $s>1$.
\end{proof}

\subsection{\large Abel summability of the negative branch.}\label{Subsection-negative-convergence}

\begin{theorem}
\label{thm:negative-branch-abel}
For $0<s<1$, the series representation of $L_{(s)}^{-}$ in
\eqref{Eq-symmetric-after-Pfaff} is Abel summable.
\end{theorem}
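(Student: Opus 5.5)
The plan is to compute the Abel means in closed form through an integral representation. Then let the Abel parameter tend to $1^{-}$ by dominated convergence. Fix $z\in\{b^2/(a^2+b^2),\,a^2/(a^2+b^2)\}$, so $0<z<1$ by \eqref{eq:transformed-arguments}. Since the prefactor $2\sqrt{a^2+b^2}/(1-s)$ is harmless, it suffices to show that
\[
f_z(x)=\sum_{k=0}^{\infty}\frac{(\alpha)_k}{k!}\,\frac{(-x)^k}{u_k^-}\,{}_2F_1\!\left(-\tfrac12,1;u_k^-+1;z\right)
\]
converges for $0\le x<1$ and has a finite limit as $x\to1^-$.

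\textbf{Step 1: convergence for $x<1$.} The terms are of size roughly $k^{1/s-1}$: we have $(\alpha)_k/k!\sim k^{1/s}/\Gamma(\alpha)$, $u_k^-\sim sk/(2(1-s))$, and the hypergeometric factor lies in $[\sqrt{1-z},1]$. Hence the series $\sum_k \frac{(\alpha)_k}{k!}\frac{x^k}{u_k^-}\,{}_2F_1$ converges for $0\le x<1$. This is also why ordinary convergence fails at $x=1$: the terms do not tend to zero.

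\textbf{Step 2: integral form of each term.} Apply Euler's representation \eqref{2F1-def-Integr} with parameters $(1,-\tfrac12;u+1)$, where $\operatorname{Re}(u+1)>1>0$. This gives
\[
\frac1{u}\,{}_2F_1\!\left(-\tfrac12,1;u+1;z\right)=\int_0^1(1-\tau)^{u-1}(1-z\tau)^{1/2}\,d\tau .
\]
Write $\gamma=s/(2(1-s))>0$, so that $u_k^-=\gamma(k+1)$.

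\textbf{Step 3: sum under the integral.} Substitute the integral into $f_z(x)$ and interchange sum and integral. This is justified for $0\le x<1$ by Tonelli applied to the absolute values, using Step 1. The inner series is binomial:
\[
\sum_{k\ge0}\frac{(\alpha)_k}{k!}(-x)^k w^{\gamma(k+1)}=w^{\gamma}\,(1+xw^{\gamma})^{-\alpha},\qquad w=1-\tau\in[0,1].
\]
Therefore
\[
f_z(x)=\int_0^1(1-\tau)^{\gamma-1}\bigl(1+x(1-\tau)^{\gamma}\bigr)^{-\alpha}(1-z\tau)^{1/2}\,d\tau .
\]

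\textbf{Step 4: the Abel limit.} On $0\le x\le1$ the integrand is bounded by $(1-\tau)^{\gamma-1}$, which is integrable on $[0,1]$ because $\gamma>0$. By dominated convergence, $f_z(x)$ tends to the finite integral obtained by setting $x=1$. Adding the two choices of $z$ gives Abel summability of \eqref{Eq-symmetric-after-Pfaff} for $0<s<1$. A further change of variables should identify this Abel sum with the perimeter integral of Theorem~\ref{Thm-3.1}, which would confirm consistency.

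The main obstacle is the interchange in Step 3. It requires $x<1$ strictly, because at $x=1$ the terms grow and no termwise argument works. That is why the limit must be taken only after the closed form is obtained.
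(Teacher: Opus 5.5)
Your proposal is correct and follows the paper's proof essentially step for step. Both arguments use Euler's integral for $\frac{1}{u_k^-}{}_2F_1(-\tfrac12,1;u_k^-+1;z)$, sum the binomial series under the integral for $0<\varepsilon<1$, and pass to $\varepsilon\to1^-$ by dominated convergence with the majorant $(1-t)^{s/(2-2s)-1}$. Your explicit Tonelli justification of the interchange, which uses the bound ${}_2F_1\in[\sqrt{1-z},1]$, supplies a step the paper only asserts.
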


\begin{proof}
Consider the series
\begin{equation}
    \sum_{k=0}^{\infty}
    \frac{(-1)^k(\alpha)_k}{k!}\frac{1}{u_k^-}
    \,{}_2F_1\!\left(-\frac12,1;u_k^-+1;z\right)\ ,
\qquad 0<s<1.
\end{equation}
Since $u_k^-\to\infty$ as $k\to\infty$,
\begin{equation}
    {}_2F_1\!\left(-\frac12,1;u_k^-+1;z\right)\to 1
\end{equation}
and
\begin{equation}
    \frac{(\alpha)_k}{k!}\frac{1}{u_k^-}
    =\frac{2(1-s)}{s}
    \frac{\Gamma(k+\alpha)}
    {\Gamma(\alpha)\Gamma(k+2)}
    \sim
    \frac{2(1-s)}{s \Gamma(1+1/s)}\ k^{1/s-1}\ .
\end{equation}
Hence, the general term satisfies
\begin{equation}
    \left|
    \frac{(-1)^k(\alpha)_k}{k!}\frac{1}{u_k^-}
    \,{}_2F_1\!\left(-\frac12,1;u_k^-+1;z\right)
    \right|
    \sim
    \frac{2(1-s)}{s\Gamma(1+1/s)}\,k^{1/s-1}\ .
\end{equation}
Since $0<s<1$, $1/s-1>0$. Therefore, the general term does not tend to zero. By the necessary condition for convergence, the series $L_{(s)}^-$ diverges in the ordinary sense. In particular, it is neither absolutely nor conditionally convergent.

Now introduce the Abel regularization
\begin{equation}
   \sum_{k=0}^{\infty}
    \frac{(-1)^k(\alpha)_k}{k!}\frac{1}{u_k^-}
    \,{}_2F_1\!\left(-\frac12,1;u_k^-+1;z\right)\varepsilon^k,
    \qquad 0<\varepsilon<1.
\end{equation}
Since the coefficients have algebraic growth, whereas $\varepsilon^k$ decays exponentially, $L_{(s)}^-(\varepsilon)$ is absolutely convergent for every $0<\varepsilon<1$.

Using the Euler-type representation
\begin{equation}
    \frac{1}{u_k^-}\ 
    {}_2F_1\!\left(-\frac12,1;u_k^-+1;z\right)
    =
    \int_0^1
    (1-t)^{u_k^--1}(1-zt)^{1/2}\,dt.
\end{equation}
Therefore,
\begin{equation}
\begin{aligned}
   &\int_0^1
    (1-zt)^{1/2}(1-t)^{s/(2-2s)-1}
    \sum_{k=0}^{\infty}
    \frac{(\alpha)_k}{k!}
    \left[-\varepsilon\ (1-t)^{s/(2-2s)}\right]^k
    \,dt =
    \int_0^1
    \frac{(1-zt)^{1/2}(1-t)^{s/(2-2s)-1}}
    {\left[1+\varepsilon (1-t)^{s/(2-2s)}\right]^\alpha} dt 
\end{aligned}
\end{equation}
For $0<\varepsilon <1$, the integrand 
\begin{equation}
   0< \frac{(1-zt)^{1/2}(1-t)^{s/(2-2s)-1}}
    {\left[1+\varepsilon (1-t)^{s/(2-2s)}\right]^\alpha}<(1-t)^{s/(2-2s)-1}
\end{equation}
is dominated by $(1-t)^{\lambda-1}$, which is integrable on $[0,1]$ because $s/(2-2s)>0$. Hence, by the dominated convergence theorem,
\begin{equation}
    \lim_{\varepsilon\to1^-} \int_0^1
    \frac{(1-zt)^{1/2}(1-t)^{s/(2-2s)-1}}
    {\left[1+\epsilon (1-t)^{s/(2-2s)}\right]^\alpha}
    \,dt
    =
    \int_0^1
    \frac{(1-zt)^{1/2}(1-t)^{s/(2-2s)-1}}
    {\left[1+(1-t)^{s/(2-2s)}\right]^\alpha}
    \,dt .
\end{equation}
Consequently,
\begin{equation}
   \sum_{k=0}^{\infty}
    \frac{(-1)^k(\alpha)_k}{k!}\frac{1}{u_k^-}
    \,{}_2F_1\!\left(-\frac12,1;u_k^-+1;z\right)
    \overset{\mathrm{Abel}}{=}
    \int_0^1
    \frac{(1-zt)^{1/2}(1-t)^{s/(2-2s)-1}}
    {\left[1+(1-t)^{s/(2-2s)}\right]^{\alpha}}
    \,dt\ .
\end{equation}
Thus, for $0<s<1$, the series diverges in the ordinary sense but is Abel-summable.

\end{proof}
\begin{remark}
The ordinary divergence of the negative branch does not invalidate the perimeter
formula. It only shows that the series representation of $L_{(s)}^{-}$ cannot be
interpreted as an ordinary convergent series. It is instead understood in the Abel sense: after
replacing $(-1)^k$ by $(-\varepsilon)^k$, with $0<\varepsilon<1$, the regularized
series is obtained from the Euler integral representation and defines a
well-defined quantity. The Abel sum is then the limit as $\varepsilon\to1^{-}$,
which coincides with the corresponding Euler integral and hence with the
perimeter. Thus, the divergence of the unregularized outer series affects only
the mode of summation, not the validity of the formula.
\end{remark}

\section{Classical superellipses and limiting cases.} \label{Section5}

Our main objective in this section is to specialize the general perimeter formula (\ref{Eq-symmetric-after-Pfaff}) to prescribed values of \(s\), thereby deriving the corresponding arc-length expressions for the associated curves. The identification of several classical curves as particular cases of Eq.~(\ref{r_s(a,b)}) is standard in the literature on Lamé curves and superellipses and is therefore not reproduced here. Instead, we restrict our attention to less familiar limiting or degenerate cases for which a derivation from the general equation may be useful.

\begin{proposition}[Cross of Lam\'e] As $s\rightarrow 0^+$, the superellipse $\mathscr{C}_s(a,b)$ degenerates to an L-shaped
cross in each quadrant, and its perimeter satisfies $L_{(0)} = 4(a+b)$.
    
\end{proposition}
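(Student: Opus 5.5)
I would prove this directly from the normalized integral representation obtained in the proof of Theorem~\ref{Thm-3.2}, rather than from the Abel-regularized series. Recall that, after the substitutions $t_1=(a/b)\tan\theta$ on $[0,\theta_0]$ and $t_2=(b/a)\cot\theta$ on $[\theta_0,\pi/2]$,
\begin{equation*}
L_{(s)}
=4b\int_0^1\frac{\sqrt{1+(a/b)^{2}t_1^{2(s-1)}}}{(1+t_1^s)^{(s+1)/s}}\,dt_1
+4a\int_0^1\frac{\sqrt{1+(b/a)^2t_2^{2(s-1)}}}{(1+t_2^s)^{(s+1)/s}}\,dt_2 .
\end{equation*}
For $0<s<1$ I would factor out the singular power, as in part B of that proof. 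The first integral becomes $4a\int_0^1 t^{s-1}\sqrt{1+(b/a)^2t^{2(1-s)}}\,(1+t^s)^{-(s+1)/s}\,dt$, and the second is analogous with the roles of $a$ and $b$ exchanged. The substitution $z=t^s$, so that $t^{s-1}dt=dz/s$, turns the first integral into
\begin{equation*}
\frac{4a}{s}\int_0^1 (1+z)^{-1-1/s}\sqrt{1+(b/a)^2 z^{2(1-s)/s}}\,dz .
\end{equation*}

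The geometric part of the statement, the degeneration to an L-shape, follows from Remark~\ref{Remark2.7}: $r(\theta_0)=2^{-1/s}\sqrt{a^2+b^2}\to0$. More generally, for $\theta\in(0,\pi/2)$ we have $r(\theta)\le \min\{a/\cos\theta,\,b/\sin\theta\}\,2^{-1/s}\cdot$(bounded), and this tends to $0$. So every interior point collapses to the origin, while the vertices $(a,0)$ and $(0,b)$ stay fixed. This supports the claim that the first-quadrant arc tends to the two segments of total length $a+b$.

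For the perimeter, the key step is a Laplace-type limit. The factor $\frac1s(1+z)^{-1-1/s}$ concentrates at $z=0$ as $s\to0^+$, and its total mass is
\begin{equation*}
\frac1s\int_0^1(1+z)^{-1-1/s}dz=1-2^{-1/s}\to1 .
\end{equation*}
Substituting $z=sw$ gives $\frac1s(1+z)^{-1-1/s}dz=(1+sw)^{-1-1/s}dw$, which converges to $e^{-w}dw$ and is dominated on $[0,1/s]$ by a fixed integrable exponential, e.g.\ $(1+sw)^{-1/s}\le e^{-w/2}$ for small $s$ on this range. Meanwhile $z^{2(1-s)/s}=(sw)^{2(1-s)/s}\to0$ for each fixed $w$, since $sw<1$ eventually and the exponent tends to $+\infty$; the square root is also bounded by $\sqrt{1+(b/a)^2}$. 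Dominated convergence then gives that the first integral tends to $4a\int_0^\infty e^{-w}dw=4a$. The same argument gives $4b$ for the second integral, hence $L_{(0)}=4(a+b)$.

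The main obstacle is the uniform domination in the rescaled variable. I would verify $(1+sw)^{-1-1/s}\le e^{-cw}$ on $0\le w\le1/s$ via $\log(1+x)\ge x\log2$ for $x\in[0,1]$, which yields $c=\log 2$. As a consistency remark, I would note that the same value is obtained from the Abel integral of Theorem~\ref{thm:negative-branch-abel}. This check is not needed for the proof.
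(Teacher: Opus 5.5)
Your argument is correct, and it takes a genuinely different route from the paper. The paper starts from the Abel-regularized symmetric series \eqref{Eq-symmetric-after-Pfaff}. It lets $u_k^-\to0$ termwise, so that each bracket freezes to ${}_2F_1(-\tfrac12,1;1;z)=(1-z)^{1/2}$ and the factor $4(a+b)$ can be pulled out of the sum. It then argues from an asymptotic form of the Pochhammer symbol that the remaining series tends to $1$, and obtains the L-shape separately by passing to the limit in the implicit equation. The exchange of $s\to0^+$ with an ordinarily divergent, Abel-summed series is never justified there.

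You bypass the series entirely. The normalized integral from the proof of Theorem~\ref{Thm-3.2} is an exact identity for every $s>0$. The rescaling $z=sw$ exposes a Laplace-type concentration with limit density $e^{-w}$. The majorant $(1+sw)^{-1-1/s}\le 2^{-w}$ on $[0,1/s]$, together with the bound $\sqrt{1+(b/a)^2}$, makes dominated convergence airtight.

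What the paper's route aims at, and yours does not deliver, is a consistency check of the hypergeometric representation itself, which is the stated purpose of Section~\ref{Section5}. Your closing remark would supply this rigorously if you carry it out. The substitution $y=(1-t)^{s/(2-2s)}$ turns the Abel integral of Theorem~\ref{thm:negative-branch-abel} into a concentrating kernel of mass $2(1-s)(1-2^{-1/s})\to2$. Hence each Abel-summed series tends to $2(1-z)^{1/2}$, and the symmetric formula also returns $4(a+b)$.

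Computing $\lim_{s\to0^+}L_{(s)}$ directly is the right choice: convergence of the shapes alone would not determine the limiting perimeter, since length is only lower semicontinuous. For the shape claim itself, the AM--GM bound $r(\theta)\le2^{-1/s}\sqrt{ab/(\sin\theta\cos\theta)}$ is cleaner than your $\min\{\cdot\}$ estimate. It gives uniform collapse on compact subsets of $(0,\pi/2)$.
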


\begin{proof}
In this limit, the length is approximated by
\begin{equation}
L^-_{(0)}=
4 (a^2+b^2)^{1/2} \left[ {}_2F_1\left(-\frac12,1;1;\frac{b^2}{a^2+b^2}\right)+{}_2F_1\left(-\frac12,1;1;\frac{a^2}{a^2+b^2}\right)\right] \lim_{s\rightarrow 0^+}
\sum_{k=0}^\infty
\frac{(-1)^k}{(k+1)!}
\frac{(1/s)_k}{s}\ .
\end{equation}
Since ${}_2F_1(\text{a},\text{c},\text{c},z)=(1-z)^{-\text{a}}$, the term outside the $k$-sum reduces to $4(a+b)$. For Pochhammer's symbol, when $s$ is very small,
\begin{equation}
    (1/s)_k\simeq s^{-k}\left(1+\frac{1}{2}(k-1) k\ s\right)+O(s)^{2-k}
\end{equation}
and the series converges to
\begin{equation}
    \sum_{k=0}^\infty
\frac{(-1)^k}{(k+1)!}
\frac{(1/s)_k}{s}\simeq 1-\frac{e^{-1/s}}{2s}
\end{equation}

Evaluating the limit
\begin{equation}
  \lim_{s\rightarrow 0^+} \frac{e^{-1/s}}{s}=\lim_{\tau\rightarrow\infty}\frac{\tau}{e^\tau}\xrightarrow[\text{}]{\text{L'H\^opital}} 0\ ,
\end{equation}
we obtain
\begin{equation}\label{L_0}
    L^-_{(0)}=4 (a+b)\ .
\end{equation}
Furthermore, from the superellipse equation 
\begin{equation}
    \lim_{s\rightarrow 0^+}\frac{r\cos\theta}{a}= \lim_{s\rightarrow 0^+} \left[1-\left(\frac{r\sin\theta}{b}\right)^s\right]\quad \text{or}\quad  \lim_{s\rightarrow 0^+}\frac{r\sin\theta}{b}= \lim_{s\rightarrow 0^+} \left[1-\left(\frac{r\cos\theta}{a}\right)^s\right]
\end{equation}
we obtain 
\begin{itemize}
    \item[i)] If $\theta\in(0, \pi/2)$, Eq.~(\ref{r_s(a,b)}) reduces to
\begin{equation}
    r=\lim_{s\rightarrow 0^+} 2^{-1/s}=0\ .
\end{equation}
\item[ii)] If $\theta\in(0, \pi/2]$ and $r\neq 0$, then
\begin{equation}
    \lim_{s\rightarrow 0^+} \left[1-\left(\frac{r\sin\theta}{b}\right)^s\right]=0\Longrightarrow \lim_{s\rightarrow 0^+}\frac{r\cos\theta}{a}=0
\end{equation}
and consequently, under these constraints, the solution is $\theta=\pi/2$ with $r\in (0, b]$.
\item[iii)] If $\theta\in[0, \pi/2)$ and $r\neq 0$, then
\begin{equation}
    \lim_{s\rightarrow 0^+} \left[1-\left(\frac{r\cos\theta}{a}\right)^s\right]=0\Longrightarrow \lim_{s\rightarrow 0^+} \frac{r\sin\theta}{b}=0
\end{equation}
and we obtain the result $\theta=0$ with $r\in (0, a]$.    
\end{itemize}
Therefore, this curve corresponds to all points in the first quadrant of the plane for which
\begin{equation}
\mathscr{C}_s\xrightarrow[\text{}]{s\rightarrow 0^+}
\left\{(r, 0) /r\in (0,a]\right\}\cup\left\{(0, \theta) /\theta\in (0,\pi/2)\right\}\cup \left\{(r, \pi/2) /r\in (0,b]\right\}
\end{equation}
The curve is L-shaped in the first quadrant, with length equal to the sum of the distances from the axes to the origin, namely $(a+b)$. Including the remaining quadrants, the curve resembles a cross, and the total arc length corresponds to the value obtained in (\ref{L_0}).
\end{proof}

In other words, since Lam\'e's stars are a set of four curves concave toward the origin, the null superellipse corresponds to the collapse of Lam\'e's stars toward the origin as $s$ becomes very small. This result is less commonly emphasized in the literature; by analogy, we refer to it as Lam\'e's cross.

\begin{proposition}[The Parabolic Star] The perimeter of the parabolic star $s=1/2$ is
\begin{equation}\label{L_1/2}
L_{(1/2)}=
4\frac{(a^{3}+b^{3})}{(a^2+b^2)}+4\frac{a^{2}b^{2}}{(a^2+b^2)^{3/2}} \left( \operatorname{arcsinh}\frac{b}{a}+\operatorname{arcsinh}\frac{a}{b}\right)\ .
\end{equation}
    
\end{proposition}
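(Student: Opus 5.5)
The plan is to compute $L_{(1/2)}$ directly from the geometry. I will not evaluate the Abel-summed series \eqref{Eq-symmetric-after-Pfaff} at $s=1/2$; since that series equals the perimeter integral of Theorem~\ref{Thm-3.1}, any exact evaluation of the arc length is an evaluation of the negative branch at $s=1/2$. For $s=1/2$ the first-quadrant arc is $\sqrt{x/a}+\sqrt{y/b}=1$. This is an arc of a parabola, which explains the name. It suggests the rational parametrization
\[
x=a\,u^2,\qquad y=b\,(1-u)^2,\qquad u\in[0,1],
\]
which runs from $(0,b)$ to $(a,0)$. By the quadrant symmetry already used in \eqref{Lquadrant}, we have
\[
L_{(1/2)}=4\int_0^1\sqrt{\dot x^2+\dot y^2}\,du=8\int_0^1\sqrt{a^2u^2+b^2(1-u)^2}\,du .
\]

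Next I complete the square. Put $c=\sqrt{a^2+b^2}$. Then
\[
a^2u^2+b^2(1-u)^2=c^2u^2-2b^2u+b^2=w^2+m^2,
\]
where $w=cu-b^2/c$ and $m=ab/c$. The substitution $dw=c\,du$ maps $u\in[0,1]$ onto $w\in[-b^2/c,\;a^2/c]$. I then apply the elementary antiderivative
\[
\int\sqrt{w^2+m^2}\,dw=\tfrac12\bigl(w\sqrt{w^2+m^2}+m^2\operatorname{arcsinh}(w/m)\bigr).
\]

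The endpoint values are clean. At $w=a^2/c$ one gets $\sqrt{w^2+m^2}=a$ and $w/m=a/b$. At $w=-b^2/c$ one gets $\sqrt{w^2+m^2}=b$ and $w/m=-b/a$. Since arcsinh is odd, the integral equals
\[
\frac{1}{2c}\left[\frac{a^3+b^3}{c}+\frac{a^2b^2}{c^2}\left(\operatorname{arcsinh}\frac ab+\operatorname{arcsinh}\frac ba\right)\right].
\]
Multiplying by $8$ gives exactly \eqref{L_1/2}. As a check, the result is manifestly symmetric in $a\leftrightarrow b$, consistent with the symmetric representation in Section~\ref{Section4}. At $a=b$ it gives $4a+2\sqrt2\,a\operatorname{arcsinh}1$, which should match the supercircle value of \cite{MontillaRodriguezLinares2026}.

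The main obstacle is not the integral but justifying the link to the paper's formula. To state the result as a specialization of Theorem~\ref{Thm-3.2}(B), I would invoke Theorem~\ref{thm:negative-branch-abel}, which identifies the Abel sum with the Euler integral and hence with the perimeter integral of Theorem~\ref{Thm-3.1}. That integral is precisely the arc length computed above. If a series-level check is wanted instead, I would take $s=1/2$, where $\alpha=3$ and $u_k^-=(k+1)/2$, and try to sum the Abel-regularized series in closed form. I expect that route to be messy, so I would keep the direct parametrization as the proof.
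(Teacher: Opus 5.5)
Your computation is correct and proves the proposition, but it takes a genuinely different route from the paper's. The paper never computes the arc length directly. Instead, it specializes the Abel-regularized symmetric series \eqref{Eq-symmetric-after-Pfaff} at $s=1/2$, where $\alpha=3$ and $u_k^-=(k+1)/2$, and replaces each ${}_2F_1\bigl(-\tfrac12,1;\tfrac{k+3}{2};z\bigr)$ by its Euler integral. It then sums $\sum_k(-1)^k(3)_k(1-t)^{k/2}/k!=(1+\sqrt{1-t})^{-3}$ under the integral sign, evaluates $\int_0^1(1-zt)^{1/2}(1-t)^{-1/2}(1+\sqrt{1-t})^{-3}\,dt$ in terms of a logarithm, and converts that logarithm into $\operatorname{arcsinh}(b/a)+\operatorname{arcsinh}(a/b)$.

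Your parametrization $x=au^2$, $y=b(1-u)^2$ of the parabolic arc bypasses all of this. It is self-contained and rigorous, since the parametrization is injective and regular on $[0,1]$. The $\operatorname{arcsinh}$ terms also come out naturally from $\int\sqrt{w^2+m^2}\,dw$ rather than through a log identity. It avoids a delicate step in the paper's route as well: at $s=1/2$ the outer series diverges in the ordinary sense, with terms growing linearly in $k$. The sum--integral interchange is therefore legitimate only after inserting the Abel factor $\varepsilon^k$; without it, the natural majorant $(1-\sqrt{1-t})^{-3}\sim 8t^{-3}$ is not integrable at $t=0$.

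What the paper's route gives, and yours does not, is an actual test of the hypergeometric representation: it checks that the Abel-summed negative branch reproduces the closed form. Your remark that your integral ``is an evaluation of the negative branch'' assumes Theorem~\ref{Thm-3.2}(B) rather than confirming it. For the proposition as stated, which is a claim about the perimeter itself, your argument is sufficient and is the cleaner proof.
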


\begin{proof}
This star is formed by four parabolic segments that intersect at $(a, 0)$, $(b, \pi/2)$, $(a, \pi)$, and $(b, 3\pi/2)$. Since its shape parameter is $s=1/2<1$, its length is given by
\begin{equation}\label{L1/2-1}
L^-_{(1/2)}=
8\sqrt{a^2+b^2}
\sum_{k=0}^\infty
\frac{(-1)^k(3)_k}{(k+1)!}
\left[{}_2F_1\left(-\frac12,1;\frac{k+3}{2};\frac{b^2}{a^2+b^2}\right)+{}_2F_1\left(-\frac12,1;\frac{k+3}{2};\frac{a^2}{a^2+b^2}\right)\right] .
\end{equation}

Rewriting the hypergeometric function using Euler's integral formula,
\begin{equation}
    {}_2F_1\left(-\frac12,1;\frac{k+3}{2};z\right)=\frac{(k+1)}{2}\int_0^1 dt (1-t)^{(k-1)/2}\left(1-z t\right)^{1/2}
\end{equation}
and performing the sum-to-integral substitution, equation (\ref{L1/2-1}) becomes
\begin{eqnarray}
&& L^-_{(1/2)}=
4(a^2+b^2)^{1/2}\int_0^1 \frac{dt}{(1-t)^{1/2}}\left(\sum_{k=0}^\infty
\frac{(-1)^k(3)_k}{k!} (1-t)^{k/2}\right)\nonumber\\
&&\qquad\qquad\qquad\qquad\qquad\times\left[\left(1-\frac{b^2}{a^2+b^2} t\right)^{1/2}+\left(1-\frac{a^2}{a^2+b^2} t\right)^{1/2}\right]\ .
\end{eqnarray}

Since the binomial series converges to $(1+ (1-t)^{1/2})^{-3}$, the integral yields
\begin{equation}
\begin{aligned}
\int_0^1 dt\frac{\left(1-z t\right)^{1/2}}{(1-t)^{1/2}(1+ (1-t)^{1/2})^{3}}=&-\frac{1}{4}(1-2z)+(1-z)^{3/2}\nonumber\\ &-\frac{1}{2}(1-z)z \ln\frac{1-z}{2(1+\sqrt{1-z})-z}
\end{aligned}
\end{equation}
Together with
\begin{eqnarray}
     \operatorname{arcsinh}(z)+\operatorname{arcsinh}\left(\frac{1}{z}\right)=\ln\left(1+z+\sqrt{1+z^2}+\frac{1+\sqrt{1+z^2}}{z}\right)\ ,
\end{eqnarray}
this gives exactly \eqref{L_1/2}, and the proposition follows.

\end{proof}

\begin{corollary}[The Rhombus]\label{Coroll-5.3} The perimeter at the transition value $s=1$ (rhombus) is equal to 
\begin{equation}\label{L1+=L1-}
L_{(1)}=
4\sqrt{a^2+b^2}\ .
\end{equation}
This value is recovered as the common Abel-regularized limit of the positive and
negative branches.
\end{corollary}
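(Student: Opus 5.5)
The plan is to first establish the value $L_{(1)}=4\sqrt{a^2+b^2}$ directly from Theorem~\ref{Thm-3.1}. Then I would show that both branches of Theorem~\ref{Thm-3.2} tend to this value as $s\to1^{\pm}$. The limit should go through the integral representation rather than the series term by term.

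\textbf{Step 1 (value at $s=1$).} For $s=1$ the curve in the first quadrant is $x/a+y/b=1$, $x,y\ge0$, i.e.\ the segment from $(a,0)$ to $(0,b)$, of length $\sqrt{a^2+b^2}$. As an analytic check, I would evaluate (\ref{L-general}) at $s=1$. The second factor becomes the constant $\sqrt{a^{-2}+b^{-2}}=\sqrt{a^2+b^2}/(ab)$. The remaining integral is $\int_0^{\pi/2}(\cos\theta/a+\sin\theta/b)^{-2}\,d\theta$. With $t=\tan\theta$ it becomes $\int_0^\infty (1/a+t/b)^{-2}\,dt=ab$. Hence the first-quadrant length is $\sqrt{a^2+b^2}$, and four copies give (\ref{L1+=L1-}).

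\textbf{Step 2 (continuity of the perimeter in $s$).} Let $I(s)$ denote the integral (\ref{L-general}). I would show $I(s)\to I(1)$ as $s\to1$ by dominated convergence. For $s$ in a compact neighbourhood $[1-\delta,1+\delta]$ of $1$, the integrand is bounded uniformly on $[0,\pi/2]$ except possibly near the endpoints, where the factor $(\cos\theta)^{2(s-1)}$ or $(\sin\theta)^{2(s-1)}$ may blow up. When one of these factors blows up, the first factor compensates: near $\theta=\pi/2$ it behaves like $b^{s+1}(\sin\theta)^{-(s+1)}$. A short local estimate near each endpoint, uniform in $s$, gives an integrable majorant.

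Alternatively, and more simply, the geometry gives a majorant directly. The superellipse $\mathscr C_s$ lies in the rectangle $[-a,a]\times[-b,b]$. Moreover, each quadrant arc is monotone in $x$ and $y$, since by Lemma~\ref{Lemma-2.6} the curvature has constant sign and the arc joins $(a,0)$ to $(0,b)$. Therefore its length is at most $a+b$. Combined with pointwise convergence of the integrand, this uniform bound on a monotone family of arcs yields convergence of the lengths.

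\textbf{Step 3 (identifying the branch limits).} For $s>1$, Theorem~\ref{Thm-3.2}(A) gives $L^+_{(s)}=I(s)$ as an ordinary convergent series. For $0<s<1$, Theorem~\ref{Thm-3.2}(B) together with Theorem~\ref{thm:negative-branch-abel} and the subsequent remark identifies the Abel sum of $L^-_{(s)}$ with the Euler integral, and hence with $I(s)$. Therefore
\[
\lim_{s\to1^+}L^+_{(s)}=\lim_{s\to1^-}L^-_{(s)}=I(1)=4\sqrt{a^2+b^2},
\]
and this is the common Abel-regularized limit claimed.

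\textbf{Main obstacle.} The main difficulty is that one cannot pass to the limit inside the series (\ref{Eq-symmetric-after-Pfaff}) directly. The prefactor $\pm1/(s-1)$ diverges, and $u_k^{\pm}\to\infty$ for every $k$. As a result, the individual terms degenerate and only their sum has a limit. This is why I route the argument through the integral $I(s)$, where the uniform domination of Step 2 is the only real estimate needed. If a series-level confirmation is desired, I would use the Abel integral from Theorem~\ref{thm:negative-branch-abel}, substitute $w=(1-t)^{\lambda}$ with $\lambda=s/(2-2s)$, and let $\lambda\to\infty$. This substitution turns the integral into a Laplace-type limit. That limit concentrates at $w$ near $0$ or $1$, and it should reproduce the factor $\sqrt{a^2+b^2}$ after recombining the two hypergeometric contributions.
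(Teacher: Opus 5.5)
Your proof is correct, and it takes a genuinely different route from the paper. The paper never evaluates the $s=1$ integral. It defines $L^\pm_{(1)}=\lim_{s\to1^\pm}\lim_{\epsilon\to1^-}L^\pm_{(s)}(\epsilon)$ and works entirely inside the series (\ref{Lepsilon}). There it expands the Pochhammer ratios to second order in $s-1$ via harmonic numbers and the ${}_2F_1$ factors in inverse powers of $u_k^\pm$. It then sums against $(-\epsilon)^k$ with harmonic-number generating functions and $\operatorname{Li}_2$, and lets $\epsilon\to1^-$. This produces $4\sqrt{a^2+b^2}\bigl[1+\frac{\pi^2}{6}\frac{a^2b^2}{(a^2+b^2)^2}(s-1)^2+O((s-1)^3)\bigr]$ from both sides.

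You instead do three things:
\begin{itemize}
\item You compute $L_{(1)}$ directly from Theorem~\ref{Thm-3.1}.
\item You get continuity of $I(s)$ at $s=1$ by dominated convergence. For $|s-1|\le\delta<1$ the majorant $C\bigl(1+(\cos\theta)^{-\delta}+(\sin\theta)^{-\delta}\bigr)$ works. The first factor is simply bounded, so nothing needs to ``compensate''.
\item You use the earlier results to identify the regularized branches with $I(s)$: ordinary convergence plus Abel's theorem for $s>1$, and Theorem~\ref{thm:negative-branch-abel} with the remark after it for $s<1$.
\end{itemize}

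Your route is shorter and rigorous. It avoids commuting the expansion in $s-1$ with the infinite sum and the Abel limit, which the paper does only formally. It also proves the value at $s=1$ rather than reading it off the series.

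Its cost is that it leans on identifying the Abel sum with the perimeter, which the paper's remark asserts rather than proves. The identification is true. After $w=(1-t)^{\lambda}$ with $\lambda=s/(2-2s)$, the Abel sum becomes $\frac4s\int_0^1(1+w)^{-\alpha}\bigl[\sqrt{a^2+b^2w^{1/\lambda}}+\sqrt{b^2+a^2w^{1/\lambda}}\bigr]dw$. This is exactly the arc length of $x=a(1+w)^{-1/s}$, $y=b\,w^{1/s}(1+w)^{-1/s}$, with $w\in[1,\infty)$ folded onto $[0,1]$ by $w\mapsto1/w$. Your route also yields only the limit. The paper's computation additionally gives the vanishing linear term and the quadratic coefficient, which are used again in Theorem~\ref{Thm-6.1}.

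Two side remarks in your write-up should be corrected.

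First, the ``alternatively, and more simply'' argument in Step~2 is not valid as stated. A uniform length bound $a+b$ plus pointwise convergence of the integrands does not give convergence of the integrals; you need domination or uniform integrability. Length of monotone arcs is only lower semicontinuous: staircases converging to the diagonal keep length $a+b$. The argument can be rescued by convexity. For $s>1$ the region $\{x,y\ge0,\ (x/a)^s+(y/b)^s\le1\}$ is convex, and for $s<1$ the region $\{(x/a)^s+(y/b)^s\ge1\}\cap([0,a]\times[0,b])$ is convex. Perimeter is continuous on planar convex bodies under Hausdorff convergence.

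Second, your closing series-level check involves no Laplace-type concentration. After the same substitution the integrand converges pointwise, because $w^{1/\lambda}\to1$. Since $2\sqrt{a^2+b^2}/((1-s)\lambda)=4\sqrt{a^2+b^2}/s$, dominated convergence gives $4\sqrt{a^2+b^2}\int_0^1 2(1+w)^{-2}\,dw=4\sqrt{a^2+b^2}$ directly.
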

    
\begin{proof}

The transition value $s=1$ must be treated by a limiting procedure, since the termwise expansions of the two branches are not ordinarily convergent at this point. For this reason, we introduce an Abel factor and define the lateral value at the rhombic limit as 
\begin{equation}
    L^\pm_{(1)}=\lim_{s\to 1^\pm} \lim_{\epsilon\to 1^-} L^\pm_{(s)}(\epsilon)
\end{equation}
The two branches are written in the unified form
\begin{align}\label{Lepsilon}
 L_{(s)}^{\pm}(\epsilon)=\frac{2\sqrt{a^2+b^2}}{\pm(s-1)}\sum_{k=0}^{\infty}
\frac{(-\epsilon)^k(\alpha)_k}{k!\,u_k^{\pm}}&\Bigg[
{}_2F_1\left(-\frac{1}{2},1;u_k^{\pm}+1;\frac{b^2}{a^2+b^2}\right)
    \nonumber\\& +
{}_2F_1\left(-\frac{1}{2},1;u_k^{\pm}+1;\frac{a^2}{a^2+b^2}
        \right)\Bigg]\ .
\end{align}

Near \(s=1\), the Pochhammer factors admit the second-order expansions
\begin{equation}
\frac{(\alpha)_k}{2(1-s)u_k^-}
\simeq 1+(1-s) H_{k+1}+(1-s)^2\left[H_{k+1}+
\frac{H_{k+1}^2-H_{k+1}^{(2)}}{2}
\right]+O(1-s)^3\ ,
\end{equation}
and 
\begin{equation}
\frac{(\alpha)_k}{2(s-1)u_k^+}
\simeq 1-(s-1) H_{k}+(s-1)^2\left[H_{k}+
\frac{H_{k}^2-H_{k}^{(2)}}{2}
\right]+O(s-1)^3\ ,
\end{equation}
where the harmonic numbers and second-order harmonic numbers are defined by
\begin{equation}
H_k=\sum_{j=1}^{k}\frac1j\ ,
\qquad
H_k^{(2)}=\sum_{j=1}^{k}\frac1{j^2}\ ,
\qquad H_0=0\ .
\end{equation}
Moreover,
\begin{equation}\label{H_k+1}
    H_{k+1}=H_k+\frac{1}{k+1}\ ,\qquad H^{(2)}_{k+1}=H^{(2)}_k+\frac{1}{(k+1)^2}\ ,
\end{equation}
 The relations in Eq.~(\ref{H_k+1}) are used to handle the shifted indices that appear in the two branches.

Since $u_k^\pm\to\infty$ as $s\to1^\pm$, the hypergeometric terms can be expanded asymptotically in inverse powers of $u_k^\pm$,
\begin{equation}
\begin{aligned}
    {}_2F_1\left(-\frac12,1;u^-_k+1;\frac{b^2}{a^2+b^2}\right)&+{}_2F_1\left(-\frac12,1;u^-_k+1;\frac{a^2}{a^2+b^2}\right)\nonumber\\
&\simeq 2-\frac{(1-s)}{k+1}-\left[k-\frac{2a^2b^2}{(a^2+b^2)^2}\right]\frac{(1-s)^2}{(k+1)^2}
+O(1-s)^3
\end{aligned}
\end{equation}
and
\begin{equation}
\begin{aligned}
    &{}_2F_1\left(-\frac12, 1;u^+_k+1;\frac{b^2}{a^2+b^2}\right)+{}_2F_1\left(-\frac12,1;u^+_k+1;\frac{a^2}{a^2+b^2}\right)\nonumber\\
&\qquad\qquad\qquad\simeq 2-\frac{(s-1)}{k+1}+\left[k-\frac{2a^2b^2}{(a^2+b^2)^2}+\frac{(a^2-b^2)^2}{(a^2+b^2)^2}\right]\frac{(s-1)^2}{(k+1)^2}
+O(s-1)^3
\end{aligned}
\end{equation}

Substituting these expansions into Eq.~(\ref{Lepsilon}) and keeping terms up to second order gives the regularized expression
\begin{equation}
\begin{aligned}
L^\pm_{(s)}(\epsilon)&\simeq 4\sqrt{a^2+b^2} 
 \left[\frac{2}{1+\epsilon}\mp\frac{\ln(1+\epsilon)}{\epsilon (1+\epsilon)}(1-\epsilon)(\pm (s-1))\right.\nonumber\\ &\left. +\left(\frac{2-\ln(1+\epsilon)}{2\epsilon(1+\epsilon)}\ln(1+\epsilon)(1-\epsilon)-\frac{2a^2b^2}{(a^2+b^2)^2}\frac{\operatorname{Li}_2(-\epsilon)}{\epsilon}\right)(\pm (s-1))^2 \right]+O(\pm (s-1))^3
 \end{aligned}
\end{equation}

The sums required to evaluate this expression are
\begin{equation}
    \sum_{k=0}^\infty (-\epsilon)^k=\frac{1}{1+\epsilon}\ ,\qquad \sum_{k=0}^\infty\frac{(-\epsilon)^k}{(k+1)^2}=\frac{\operatorname{Li}_2(-\epsilon)}{\epsilon}
\end{equation}
and
\begin{equation}
   \sum_{k=0}^\infty (-\epsilon)^k H_{k}=-\frac{\ln(1+\epsilon)}{1+\epsilon}\ ,\qquad  \sum_{k=0}^\infty (-\epsilon)^k \left(H^2_{k}-H^{(2)}_k\right)=\frac{\ln^2(1+\epsilon)}{1+\epsilon}\ .
\end{equation}
They follow from the standard generating functions for harmonic numbers and from termwise integration. The dilogarithm appears naturally from the summation of terms with quadratic denominators, and its definition is recalled as
\begin{equation}
\operatorname{Li}_2(z)=\sum_{m=1}^\infty\frac{z^m}{m^2}\ ,\qquad |z|<1
\end{equation}

After performing the sums and taking the Abel limit, the first-order contribution vanishes, while the second-order term remains finite. Therefore, both lateral branches lead to the Abel-regularized expansion 
\begin{equation}
L^\pm_{(s)}\overset{\mathrm{Abel}}{=}
4\sqrt{a^2+b^2}
\left[
1+
\frac{\pi^2}{6}
\frac{a^2b^2}{(a^2+b^2)^2}
(\pm (s-1))^2
+
O(\pm (s-1))^3
\right]\ .
\end{equation}
In particular,
\begin{equation}
L_{(1)}=\lim_{s\to 1^\pm}L^\pm_{(s)}=
4\sqrt{a^2+b^2}\ ,
\end{equation}
which shows that the common limiting value is precisely the perimeter of the rhombus.

\end{proof}

Thus, the Abel-regularized representation recovers the expected rhombic perimeter at \(s=1\). The geometric interpretation of the second-order term, as well as its relation with the local minimality of the perimeter, will be discussed in Section~\ref{Section6}.

\begin{corollary}[Ellipse] For $s=2$, the formula of Theorem \ref{Thm-3.2} (positive branch) reduces to the classical elliptic perimeter
\begin{equation}\label{Elipse}
    L=4a\ \text{E}(\sqrt{1-b^2/a^2})\ ,
\end{equation}
where $\text{E}(z)$ is the complete elliptic integral of the second kind.
    
\end{corollary}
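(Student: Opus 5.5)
At $s=2$ the positive-branch series should collapse to $4a\,E(k)$ with $k^2=1-b^2/a^2$, and I would get there by re-summing it back into a single integral rather than manipulating the ${}_2F_1$ factors one at a time. Put $s=2$ in Theorem~\ref{Thm-3.2}(A). Then
\[
\alpha=\tfrac32,\qquad v_k^+=\tfrac{1+2k}{2},\qquad u_k^+=\tfrac{2k+1}{2}=v_k^+ .
\]
The two contributions therefore carry the same parameter, with prefactors $4a/2=2a$ and $2b/(2-1)=2b$.

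The step I would rely on is to reverse the derivation in the proof of Theorem~\ref{Thm-3.2}: write each factor $\frac1{v}{}_2F_1(\cdot)$ as its Euler integral and sum the binomial series under the integral. For the first sum this gives
\[
2a\int_0^1 z^{-1/2}(1+z)^{-3/2}\bigl(1+(b/a)^2z\bigr)^{1/2}\,dz,
\]
since $\sum_k \frac{(-1)^k(-1/2)_k}{k!}(b^2z/a^2)^k=(1+b^2z/a^2)^{1/2}$. For the second sum, $\sum_k\frac{(-1)^k(3/2)_k}{k!}z^k=(1+z)^{-3/2}$, so it gives
\[
2b\int_0^1 z^{-1/2}(1+z)^{-3/2}\bigl(1+(a^2/b^2)z\bigr)^{1/2}\,dz .
\]
The interchange of sum and integral is routine for the first sum, because $b\le a$ and $0\le z\le1$ keep the series absolutely convergent. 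For the second sum it is only conditional, since the binomial series reaches $z=1$. I would justify it by Abel's theorem, inserting $\varepsilon^k$, using dominated convergence with the integrable majorant $z^{-1/2}$, and invoking the conditional convergence of the positive branch proved in Section~\ref{Subsection-positive-convergence}.

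Next, substitute $z=\tan^2\varphi$ in each integral, so that $dz=2\tan\varphi\sec^2\varphi\,d\varphi$ and $(1+z)^{-3/2}=\cos^3\varphi$. The first integral becomes
\[
4\int_0^{\pi/4}\sqrt{a^2\cos^2\varphi+b^2\sin^2\varphi}\,d\varphi .
\]
The second becomes the same expression with $a\leftrightarrow b$, and the reflection $\varphi\mapsto\pi/2-\varphi$ turns it into the integral of the same integrand over $[\pi/4,\pi/2]$. Adding the two gives
\[
4\int_0^{\pi/2}\sqrt{a^2\cos^2\varphi+b^2\sin^2\varphi}\,d\varphi=4a\int_0^{\pi/2}\sqrt{1-k^2\sin^2\varphi}\,d\varphi=4a\,E(k),
\]
with $k^2=1-b^2/a^2$. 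This is \eqref{Elipse}. The split at $\pi/4$ is simply the image, in the eccentric-anomaly parametrisation, of the paper's partition at $\theta_0$.

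As a purely series-level alternative, I would expand the ${}_2F_1$ factors in powers of $(b/a)^2$ and match the result against the Maclaurin series ${}_2F_1(-\tfrac12,\tfrac12;1;k^2)$. I would avoid this route because the $(-1)^k$ structure makes the bookkeeping heavy.

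The main obstacle is the interchange in the second sum. That series converges only conditionally at the endpoint, so I would handle it by the Abel-limit argument above.
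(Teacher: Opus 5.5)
Your proposal is correct and follows essentially the paper's route. In both, the factors are written as Euler integrals, the binomial series is resummed under the integral, and a tangent substitution split at $\varphi=\pi/4$ is applied. The paper works on the Pfaff-transformed symmetric form with $t=1-\tan^2\phi$ and $t=1-\cot^2\phi$, which is your $z=\tan^2\varphi$ after setting $z=1-t$.

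Your Abel-theorem justification of the interchange in the $(3/2)_k$ sum adds rigor that the paper omits. One correction to your wording: the binomial series $\sum_k(-1)^k(3/2)_k z^k/k!$ is not conditionally convergent at $z=1$; it diverges there, since its terms grow like $k^{1/2}$. What your argument actually uses is the conditional convergence of the outer series from Section~\ref{Subsection-positive-convergence}, which transfers termwise to your second sum via Pfaff's transformation. Your integrands also correctly keep the square roots that the paper's displayed formulas drop.
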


This curve belongs to the family $s>1$ and corresponds to an ordinary ellipse. As is well known, the circumference of an ellipse with semi-major and semi-minor axes $a$ and $b$ is given by the complete elliptic integral in (\ref{Elipse}).

We now show that for $s=2$ it is possible to obtain the total arc length (\ref{Elipse}) as a particular case of the superelliptic perimeter formula (\ref{Eq-symmetric-after-Pfaff}). That is,

\begin{proof}
For $s=2$, let
\begin{equation}\label{s=2}
L^+_{(2)}=
4\sqrt{a^2+b^2}
\sum_{k=0}^\infty
\frac{(-1)^k(3/2)_k}{k!(2k+1)}
\left[ {}_2F_1\left(-\frac12,1;k+\frac{3}{2};\frac{b^2}{a^2+b^2}\right)+{}_2F_1\left(-\frac12,1;k+\frac{3}{2};\frac{a^2}{a^2+b^2}\right)\right]\ .
\end{equation}
 From Euler's integral formula
\begin{equation}
    {}_2F_1\left(-\frac12,1;k+\frac{3}{2};z\right)=\frac{(2k+1)}{2}\int_0^1 dt\ (1-t)^{k-1/2}\left(1-z t\right)^{1/2}
\end{equation}
and the binomial series
\begin{equation}
    \sum_{k=0}^\infty
\frac{(-1)^k(3/2)_k}{k!} (1-t)^{k}=(2-t)^{-3/2}
\end{equation}
we obtain
\begin{eqnarray}\label{L2}
L^+_{(2)}=
2(a^2+b^2)^{1/2}\int_0^1 \frac{dt}{(1-t)^{1/2}(2-t)^{3/2}}\left[\left(1-\frac{b^2}{a^2+b^2} t\right)^{1/2}+\left(1-\frac{a^2}{a^2+b^2} t\right)^{1/2}\right]
\end{eqnarray}

Now, making the substitution $t=1-\tan^2\phi$  in the first integral,
\begin{equation}
    \int_0^1 \frac{dt}{(1-t)^{1/2}(2-t)^{3/2}}\left(1-\frac{b^2}{a^2+b^2} t\right)^{1/2}=\frac{2a}{(a^2+b^2)^{1/2}}\int_0^{\pi/4} d\phi\ \left[1-\left(1-\frac{b^2}{a^2}\right)\sin^2\phi\right]
\end{equation}
and $t=1-\cot^2\phi$ in the second integral,
\begin{equation}
    \int_0^1 \frac{dt}{(1-t)^{1/2}(2-t)^{3/2}}\left(1-\frac{a^2}{a^2+b^2} t\right)^{1/2}=\frac{2a}{(a^2+b^2)^{1/2}}\int_{\pi/4}^{\pi/2} d\phi\ \left[1-\left(1-\frac{b^2}{a^2}\right)\sin^2\phi\right]
\end{equation}
It follows that the perimeter (\ref{s=2}) reduces to the complete elliptic integral of the second kind,
\begin{eqnarray}
  \text{E}(z)=  \int_0^{\pi/2} d\phi\ \left(1-z^2 \sin^2\phi\right)\ ,
\end{eqnarray}
Therefore, the expression for the perimeter reduces exactly to formula~(\ref{Elipse}). This proves the corollary.

\end{proof}

\begin{proposition}[Rectangle] As $s\rightarrow\infty$, the superellipse $\mathscr{C}_s(a,b)$ approaches the rectangle
$[-a, a]\times [-b, b]$, and
\begin{equation}
    L_{(\infty)}=4(a+b) .
\end{equation}
\end{proposition}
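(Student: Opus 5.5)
The plan has two parts, mirroring the proof of the Lamé-cross proposition: a geometric identification of the limit curve, and then the perimeter limit taken from the positive branch of \eqref{Eq-symmetric-after-Pfaff}.

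\textbf{Geometric limit.} For $\theta\in[0,\pi/2]$ we have
\[
r_{a,b}(\theta)^{-1}=\Big\|\big(\cos\theta/a,\ \sin\theta/b\big)\Big\|_s .
\]
As $s\to\infty$, the $s$-norm converges to the max-norm, so
\[
r_{a,b}(\theta)\to \min\{a/\cos\theta,\ b/\sin\theta\}.
\]
This limit is exactly the polar equation of the boundary of $[-a,a]\times[-b,b]$ in the first quadrant. It equals $a/\cos\theta$ for $\theta\le\theta_0$ and $b/\sin\theta$ for $\theta\ge\theta_0$, so the transition angle $\theta_0=\arctan(b/a)$ of Remark~\ref{Remark2.7} is precisely the corner $(a,b)$. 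The convergence is uniform in $\theta$, because $\|x\|_\infty\le\|x\|_s\le 2^{1/s}\|x\|_\infty$. The remaining quadrants follow by the axial symmetries.

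\textbf{Perimeter limit.} Convergence of curves alone does not give convergence of lengths, so I would justify the length separately. The cleanest route is convexity: for $s>1$ the curves are convex (Lemma~\ref{Lemma-2.6}), and the perimeter of convex bodies is continuous under Hausdorff convergence. This gives $L_{(s)}\to 8a+8b$ for the full rectangle, i.e. $4(a+b)$ in the paper's normalization, which matches the $L$-shape value $(a+b)$ per quadrant used in the cross case. Here I must be careful: the rectangle's perimeter is $4(a+b)$, since its full sides are $2a$ and $2b$.

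To stay within the paper's framework, I would also derive the value from \eqref{Eq-symmetric-after-Pfaff}. As $s\to\infty$ we have $\alpha\to1$, $u_k^+\to(k+1)/2$, $(\alpha)_k/k!\to1$, and the prefactor $2\sqrt{a^2+b^2}/(s-1)\to0$. So the series must blow up like $s$, which rules out a naive termwise limit. Instead, following the ellipse and parabolic-star computations, I would resum with Euler's integral
\[
\frac{1}{u}\,{}_2F_1\!\left(-\tfrac12,1;u+1;z\right)=\int_0^1(1-t)^{u-1}(1-zt)^{1/2}\,dt
\]
and the binomial series. This gives
\begin{equation*}
L^+_{(s)}=\frac{2\sqrt{a^2+b^2}}{s-1}\int_0^1\frac{(1-t)^{\frac{1}{2(s-1)}-1}\left[(1-z_1t)^{1/2}+(1-z_2t)^{1/2}\right]}{\left(1+(1-t)^{\frac{s}{2(s-1)}}\right)^{\alpha}}\,dt,
\end{equation*}
with $z_1=b^2/(a^2+b^2)$ and $z_2=a^2/(a^2+b^2)$. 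Since the summand decays only like $k^{-(s-1)/s}$, this step is to be read in the Abel sense, as in the convergence subsection.

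As $s\to\infty$, the factor $\frac{1}{s-1}(1-t)^{\frac{1}{2(s-1)}-1}$ becomes an approximate delta at $t=1$ of total mass $2$. Equivalently, set $\lambda=1/(2(s-1))$ and use $\int_0^1\lambda(1-t)^{\lambda-1}g(t)\,dt\to g(1)$ as $\lambda\to0$. At $t=1$ the denominator tends to $1^{\alpha}=1$ (indeed $(1-t)^{s/(2(s-1))}\to0$), and the bracket equals
\[
\sqrt{1-z_1}+\sqrt{1-z_2}=\frac{a+b}{\sqrt{a^2+b^2}} .
\]
The limit is therefore a constant multiple of $a+b$, and I would fix the constant by comparison with the convex-geometry limit, yielding $4(a+b)$.

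\textbf{Main obstacle.} The hardest step is the interchange of $s\to\infty$ with the Abel-regularized sum, i.e.\ the uniformity needed for the delta-concentration argument. I would handle it by splitting $[0,1]$ at $1-\delta$. On $[0,1-\delta]$ the integrand is $O(1/s)$. On $[1-\delta,1]$, continuity of the bracket and of the denominator controls the error, so dominated convergence applies.
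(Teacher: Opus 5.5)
Your argument is correct, but it takes a different route from the paper. The paper never leaves the series \eqref{Eq-symmetric-after-Pfaff}. It absorbs the prefactor into $1/u_k^+$, which gives the weight $4\sqrt{a^2+b^2}/(sk+1)$. It then isolates the $k=0$ term, which tends to $4\sqrt{a^2+b^2}\,\bigl[(1-z_1)^{1/2}+(1-z_2)^{1/2}\bigr]=4(a+b)$, because $u_0^+=1/(2(s-1))\to0$ and ${}_2F_1(-\tfrac12,1;1;z)=(1-z)^{1/2}$. Finally it argues that the $k\ge1$ tail is $O(1/s)$ times a Leibniz-convergent series.

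This exposes a slip in your motivation: $u_k^+=(sk+1)/(2(s-1))\to k/2$, not $(k+1)/2$. So the growth like $s$ comes entirely from the $k=0$ term, and a termwise limit is not ruled out. It is exactly what the paper does.

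Your route offers two things the paper's does not.
\begin{itemize}
\item The convex-geometry argument is series-free. It combines uniform convergence of the radial function to that of the rectangle, convexity for $s>1$, and continuity of perimeter under Hausdorff convergence of convex bodies. It proves the geometric half of the statement, which the paper's proof takes for granted, and it yields the limit of the true perimeter. By itself, however, it does not test the hypergeometric formula, which is the point of these limiting-case checks.
\item The resummation into an Euler integral plus the approximate-identity argument does test the formula. It also supplies the uniformity in $s$ that the paper's passage to the limit lacks. The paper only checks convergence of the tail series at $s=\infty$, after already replacing $(\alpha)_k/k!$, $sk+1$ and $u_k^+$ by their limits inside the sum.
\end{itemize}
The paper's version, in turn, is shorter and makes transparent that the whole limit is carried by the single $k=0$ term.

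Two points to clean up.
\begin{itemize}
\item Delete the sentence about $8a+8b$ ``in the paper's normalization''. $L_{(s)}$ is the full perimeter, and the rectangle $[-a,a]\times[-b,b]$ has perimeter $4(a+b)$, with no rescaling.
\item Do not fix the constant in the analytic route by comparison with the geometric limit, since that would make the analytic check circular. You already have everything you need. The kernel $\frac{1}{s-1}(1-t)^{\frac{1}{2(s-1)}-1}$ has mass exactly $2$, and the bracket at $t=1$ equals $(a+b)/\sqrt{a^2+b^2}$. The limit is therefore $2\sqrt{a^2+b^2}\cdot 2\cdot (a+b)/\sqrt{a^2+b^2}=4(a+b)$ directly.
\end{itemize}
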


\begin{proof} To determine the corresponding perimeter, we start from the following expression
\begin{equation}
L^+_{(s)}=4 \sqrt{a^2+b^2}
\sum_{k=0}^\infty
\frac{(-1)^k(\alpha)_k}{k!(s k+1)}\left[ {}_2F_1\left(-\frac12,1;u^+_k+1;\frac{b^2}{a^2+b^2}\right)+{}_2F_1\left(-\frac12,1;u^+_k+1;\frac{a^2}{a^2+b^2}\right)\right]\ .
\end{equation}
Observe that the first term of the series does not contain factors of the form $k s$, whereas such products appear in all subsequent terms. Since, for $k>1$, these factors remain well behaved as $s \rightarrow \infty$, it is natural to separate the first term from the remainder of the summation
\begin{eqnarray}
L^+_{(\infty)}=4\sqrt{a^2+b^2}\left[{}_2F_1\left(-\frac12,1;1;\frac{b^2}{a^2+b^2}\right)+{}_2F_1\left(-\frac12,1;1;\frac{a^2}{a^2+b^2}\right)\right]&&\nonumber\\
+\lim_{s\rightarrow\infty}\frac{\ \sqrt{a^2+b^2}}{s }\sum_{k=1}^\infty
\frac{(-1)^k}{k}\left[ {}_2F_1\left(-\frac12,1;\frac{k}{2}+1;\frac{b^2}{a^2+b^2}\right)\right.&&\nonumber\\ \left.+{}_2F_1\left(-\frac12,1;\frac{k}{2}+1;\frac{a^2}{a^2+b^2}\right)\right]&&\ .
\end{eqnarray}

The first term, together with its symmetric counterpart, reduces to 
\begin{equation}
    {}_2F_1\left(-\frac12,1;1;\frac{b^2}{a^2+b^2}\right)+{}_2F_1\left(-\frac12,1;1;\frac{a^2}{a^2+b^2}\right)=\frac{a+b}{\quad \sqrt{a^2+b^2}}
\end{equation}
where (\ref{F(accz)}) has been used, whereas the limit of the second term cannot be determined without first establishing the convergence of the series
\begin{equation}\label{Sum[2F1,k>1]}
\sum_{k=1}^{\infty}\frac{(-1)^k}{k}
\,{}_2F_1\!\left(-\frac12,1;\frac{k}{2}+1;z\right),
\qquad 0<z<1\ .
\end{equation}
This is done using the Leibniz criterion and a comparison argument.

We begin by considering 
\begin{equation}
\frac{1}{k}
{}_2F_1\!\left(-\frac12,1;\frac{k}{2}+1;z\right)
=
\frac12\int_0^1
(1-t)^{k/2-1}(1-zt)^{1/2}\,dt.
\end{equation}
Since the integrand in Euler's integral representation is positive on \((0,1)\), we have
\begin{equation}
\frac{1}{k}
{}_2F_1\!\left(-\frac12,1;\frac{k}{2}+1;z\right)>0\ .
\end{equation}
Moreover, for every \(t\in(0,1)\),
\begin{equation}
(1-t)^{(k+1)/2-1}
<
(1-t)^{k/2-1}\ .
\end{equation}
Therefore,
\begin{equation}
\frac{1}{k+1}
{}_2F_1\!\left(-\frac12,1;\frac{k+1}{2}+1;z\right)
<
\frac{1}{k}
{}_2F_1\!\left(-\frac12,1;\frac{k}{2}+1;z\right),
\end{equation}
so the sequence of positive coefficients is decreasing.

On the other hand, since \(0<(1-zt)^{1/2}\le 1\), it follows that
\begin{equation}
0<
\frac{1}{k}
{}_2F_1\!\left(-\frac12,1;\frac{k}{2}+1;z\right)
\le
\frac12\int_0^1(1-t)^{k/2-1}\,dt
=
\frac1k.
\end{equation}
Thus,
\begin{equation}
\frac{1}{k}
{}_2F_1\!\left(-\frac12,1;\frac{k}{2}+1;z\right)
\to0\ , \qquad k\to\infty\ .
\end{equation}
The Leibniz criterion then implies that the series converges. 

To determine the nature of the convergence, observe that
\begin{equation}
{}_2F_1\!\left(-\frac12,1;\frac{k}{2}+1;z\right)\to1\ ,
\qquad k\to\infty\ ,
\end{equation}
because the kernel $(k/2)(1-t)^{k/2-1}$ has unit mass and concentrates at \(t=0\). Consequently,
\begin{equation}
\frac{1}{k}
{}_2F_1\!\left(-\frac12,1;\frac{k}{2}+1;z\right)
\sim \frac1k.
\end{equation}
Therefore, the series (\ref{Sum[2F1,k>1]}) diverges by comparison with the harmonic series. Hence, the original series converges conditionally. Consequently, the series defines a finite constant, and therefore
\begin{equation}
\lim_{s\to\infty}
\frac{\sqrt{a^2+b^2}}{s}
\sum_{k=1}^{\infty}\frac{(-1)^k}{k}\,
{}_2F_1\!\left(-\frac12,1;\frac{k}{2}+1;z\right)
=0\ .
\end{equation}
Finally, we obtain
\begin{equation}
    L_{(\infty)}=4(a+b).
\end{equation}
\end{proof}
 
Since $L_{(s)}$ is continuous and 
the endpoint values coincide, the generalized Weierstrass theorem 
guarantees the existence of global extrema for $s>0$. If the 
perimeter function is nonconstant, a Rolle-type argument further implies 
the existence of at least one interior stationary point. In the next 
section, we prove that the relevant extremum is attained at the rhombic 
case $s=1$.

\section{The minimal perimeter in the Lam\'e family.}\label{Section6}

\begin{theorem}[Global minimality of the rhombus]\label{Thm-6.1}
Let $a,b>0$ be fixed, and let 
$\{\mathscr{C}_{s}(a,b):s>0\}$ be the Lam\'e family. Then, the perimeter
$L_{(s)}$ satisfies
\begin{equation}
L_{(s)}\geq L_{(1)}=4\sqrt{a^2+b^2},
\end{equation}
for every $s>0$. The minimum is attained only at $s=1$; equivalently, the rhombus
$\mathscr{C}_{1}(a,b)$ is the unique global minimizer of the perimeter within
the Lam\'e family. Moreover, $s=1$ is a strict local minimum of the
Abel-regularized analytic representation of the two hypergeometric branches.
\end{theorem}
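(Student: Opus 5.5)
The plan is to prove the global inequality geometrically, directly from the arc-length integral of Theorem~\ref{Thm-3.1}, and not from the hypergeometric series. After that, the local statement about the Abel-regularized branches is read off from the expansion in Corollary~\ref{Coroll-5.3}.

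\emph{Reduction to one quadrant.} By the axial symmetries of the curve, $L_{(s)}=4\ell_s$, where $\ell_s$ is the length of the first-quadrant arc $\gamma_s$. For every $s>0$, $\gamma_s$ is a continuous rectifiable curve from $P_0=(a,0)$, at $\theta=0$, to $P_1=(0,b)$, at $\theta=\pi/2$. For $0<s<1$ it is still rectifiable: Lemma~\ref{Lemma-2.5} only gives cusps at the endpoints, where the integrand of Theorem~\ref{Thm-3.1} stays integrable. The length of a curve is the supremum of the lengths of its inscribed polygons. Taking the two-segment polygon through an intermediate point $P\in\gamma_s$ therefore gives
\[
\ell_s\;\geq\;|P_0P|+|PP_1|\;\geq\;|P_0P_1|=\sqrt{a^2+b^2},
\]
so $L_{(s)}\ge 4\sqrt{a^2+b^2}=L_{(1)}$. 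For $s=1$ the arc is exactly the segment $x/a+y/b=1$, so equality holds there.

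\emph{Strictness for $s\neq1$.} I take $P$ to be the point on the transition ray $\theta=\theta_0$ of Remark~\ref{Remark2.7}. By item (ii) of that remark, $|OP|=r(\theta_0)=2^{-1/s}\sqrt{a^2+b^2}$. The segment $P_0P_1$ meets the same ray at the point with $x=a/2$, which has radius $\tfrac12\sqrt{a^2+b^2}$. Since $2^{-1/s}\neq\tfrac12$ for $s\neq1$, the point $P$ is not on the line through $P_0$ and $P_1$. The triangle inequality is then strict, so $\ell_s>\sqrt{a^2+b^2}$. This gives both the inequality and uniqueness of the minimizer in one step.

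\emph{Local minimality of the regularized branches.} Here I invoke the Abel-regularized expansion established in the proof of Corollary~\ref{Coroll-5.3}:
\[
L^\pm_{(s)}=4\sqrt{a^2+b^2}\Bigl[1+\tfrac{\pi^2}{6}\tfrac{a^2b^2}{(a^2+b^2)^2}(s-1)^2+O(|s-1|^3)\Bigr].
\]
The first-order term vanishes on both sides and the quadratic coefficient is strictly positive. Hence both branches exceed $L_{(1)}$ on a punctured neighbourhood of $s=1$, so $s=1$ is a strict local minimum of the analytic representation. This agrees with the geometric bound, because Theorem~\ref{Thm-3.2} identifies the branches with $L_{(s)}$.

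\emph{Main obstacle.} The main difficulty is the rigor of this last step: the expansion was derived by interchanging the $s\to1$ expansion with Abel summation. The geometric argument already proves the global statement independently of that interchange, so I would present the series computation only as a consistency check. The one point I would verify carefully is the rectifiability of the arc, and the finiteness of the integral in Theorem~\ref{Thm-3.1}, near the cusps for $0<s<1$. Near $\theta=0$ the integrand behaves like $\theta^{s-1}$, which is integrable because $s>0$.
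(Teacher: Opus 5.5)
Your proposal is correct and follows the same route as the paper: the length--distance inequality for the first-quadrant arc joining $(a,0)$ and $(0,b)$ gives $L_{(s)}\geq 4\sqrt{a^2+b^2}$, and the local statement is read off from the Abel-regularized expansion of Corollary~\ref{Coroll-5.3}. Your witness point on the ray $\theta=\theta_0$, with $r(\theta_0)=2^{-1/s}\sqrt{a^2+b^2}\neq\tfrac12\sqrt{a^2+b^2}$ for $s\neq1$, sharpens the uniqueness step, which the paper only asserts by citing Corollary~\ref{Coroll-5.3}; that corollary evaluates the perimeter at $s=1$ but does not show the arc is non-straight for $s\neq1$.
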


\begin{proof}
By the axial symmetries of the Lam\'e curve, it is enough to consider the arc
contained in the first quadrant. For every $s>0$, this arc joins the fixed
endpoints $(a,0)$ and $(0,b)$. Therefore, if $\gamma_s$ denotes this arc, its
length satisfies the elementary geometric inequality
\begin{equation}
\ell(\gamma_s)
\geq
\|(a,0)-(0,b)\|
=
\sqrt{a^2+b^2}.
\end{equation}
Since the full curve consists of four congruent arcs, it follows that
\begin{equation}
L_{(s)}=4\ell(\gamma_s)\geq 4\sqrt{a^2+b^2}.
\end{equation}

Equality in the length-distance inequality occurs only when
$\gamma_s$ is the straight segment joining $(a,0)$ and $(0,b)$. In the Lam\'e
family, this happens precisely for $s=1$; see Corollary~\ref{Coroll-5.3}.
Thus, the complete curve is the rhombus with vertices $(\pm a,0)$ and
$(0,\pm b)$. This proves both the global minimality and the uniqueness of the minimizer.

It remains to describe the analytic behavior of the branch representations near
the minimizing value. From Corollary~\ref{Coroll-5.3}, the Abel-regularized
expansions of the two branches satisfy
\begin{equation}
L_{(s)}^{\pm}
\overset{\mathrm{Abel}}{\simeq}
4\sqrt{a^2+b^2}
\left[
1+
\frac{\pi^2}{6}
\frac{a^2b^2}{(a^2+b^2)^2}
\bigl(s-1\bigr)^2
+
O\bigl(s-1\bigr)^3
\right].
\end{equation}
Hence, both branches have the common limiting value
\begin{equation}
\lim_{s\to1^\pm}L_{(s)}^\pm
=
4\sqrt{a^2+b^2}
=
L_{(1)}.
\end{equation}
Moreover, the absence of a linear term implies that the first lateral
derivatives vanish at $s=1$, in the Abel-regularized sense
\begin{equation}
\lim_{s\to1^\pm}\frac{dL_{(s)}^\pm}{ds}=0.
\end{equation}
Finally, the first nonzero correction is quadratic with positive coefficient,
namely
\begin{equation}
\frac{\pi^2}{6}
\frac{a^2b^2}{(a^2+b^2)^2}>0.
\end{equation}
Therefore, the Abel-regularized branch representation has a strict local
minimum at $s=1$. Together with the preceding geometric argument, this confirms
that the rhombus is the unique global minimizer of the perimeter in the Lam\'e
family.
\end{proof}

\begin{remark}

The second-order terms represent the first nontrivial correction induced by a perturbation of the Lam\'e exponent $s$ around the rhombic case $s=1$. In particular, the geometric factor $a^2b^2/(a^2+b^2)^2$ measures the balance between the two semi-axes. This factor attains its maximum when $a=b$ and tends to zero when one semi-axis dominates the other, namely as $a/b\to0$ or $a/b\to\infty$. Since the full quadratic coefficient is positive, the expansion shows that, at least in this local Abel-regularized sense, the perimeter increases quadratically as the curve departs from the case $s=1$. Consequently, this coefficient quantifies the deviation of the Lam\'e curve perimeter from the rhombus perimeter under small perturbations of the exponent $s$ around $1$.
\end{remark}

This analytic conclusion is supported by the behavior displayed in Fig.~\ref{Tercera imagen} for $L_{(s)}$ versus $s$. As $s$ approaches $1$ from the left, the perimeter decreases toward the rhombic value, whereas for $s>1$ it increases away from that same value. Hence, $s=1$ represents the minimum of the curve $L_{(s)}$. In the plotted case, this minimum is attained at the rhombus, while the neighboring cases, such as the parabolic case, the astroid, the ellipse, and the quasi-rectangular limit, all have larger perimeter.

\begin{figure}[h]
\centering
\includegraphics[width=1.0\linewidth]{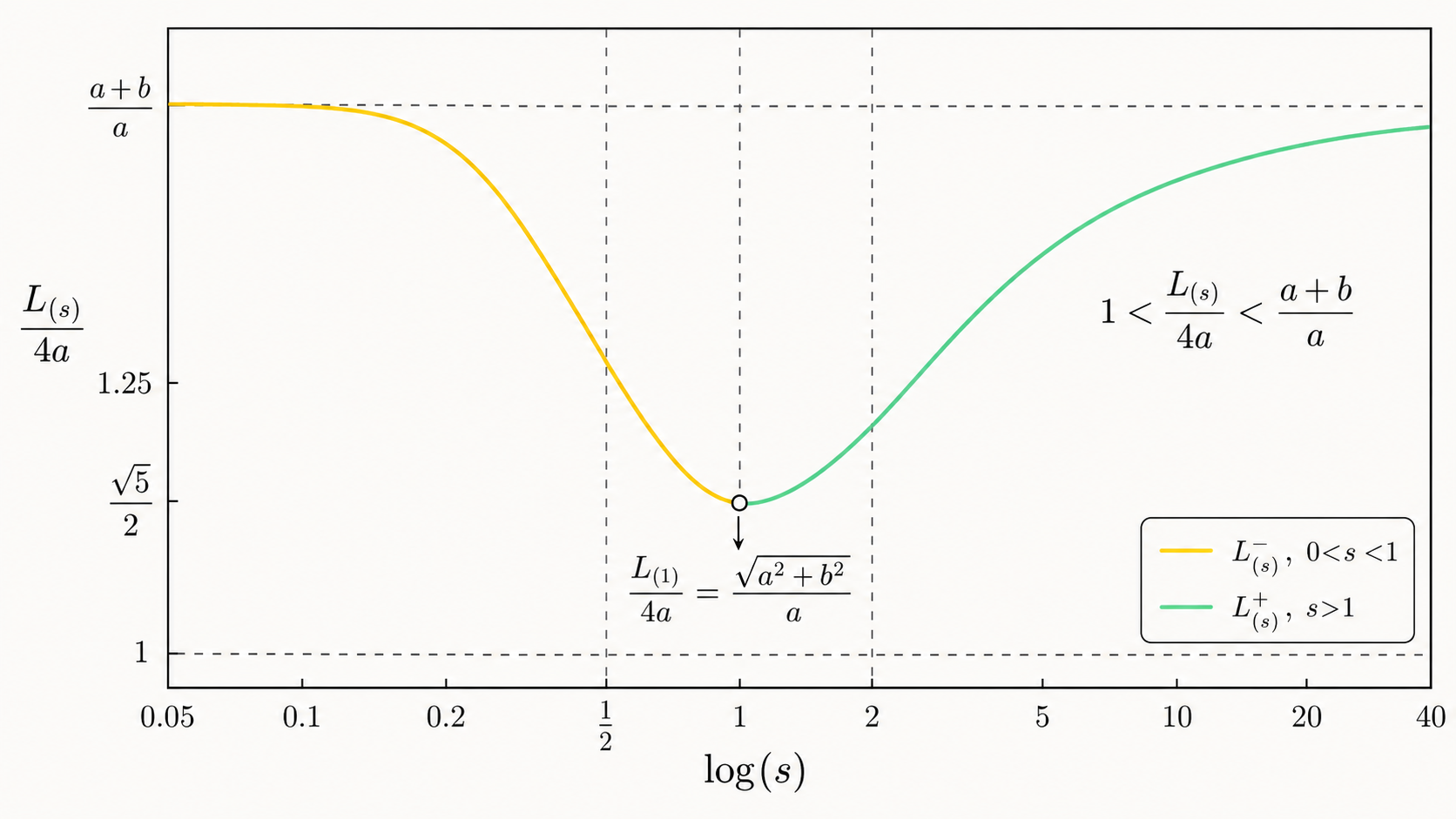}
\caption{Perimeter of superellipses $L_{(s)}$ depicted for $a=2b$ and $b=1$. A semi-logarithmic scale is used on the horizontal axis to better visualize the behavior for small and large values of $s$. The minimum at $s=1$ (Theorem~\ref{Thm-6.1}) is clearly visible.}
\label{Tercera imagen}
\end{figure}

\section{Conclusions and remarks.}\label{Conclusions}

In this paper, we have analytically established, in accordance with Lemmas~\ref{Lemma-2.5} and~\ref{Lemma-2.6}, that the parameter $s$ determines both the local regularity at the vertices and the concavity regime of the superellipse. For $s>1$, the vertices $\theta_k=k\pi/2$ are regular $C^{1}$-points, and the four quadrantal arcs are convex. At the transition value $s=1$, the curve reduces to the rhombic case: the curvature vanishes along the straight sides, whereas the vertices become angular points with finite but discontinuous one-sided derivatives. For $0<s<1$, the regularity at the vertices breaks down, producing cusps with infinite one-sided derivatives of opposite signs, and each quadrantal arc is concave toward the origin. Thus, $s=1$ separates the concave cuspidal regime from the smooth convex regime.

The geometric analysis also identifies the transition direction 
$\theta_0=\arctan(b/a)$, which provides a natural partition of the first 
quadrant into two sectors and serves as a reference direction for comparing 
the behavior above and below the diagonal of the associated rectangle, as 
described in Remark~2.7. Starting from this geometric structure, the 
hypergeometric branch representation established in Theorem~3.2 gives an 
exact analytic expression for the perimeter for $s>0$, $s\neq 1$. The 
subsequent analysis of convergence and symmetry leads to the symmetric 
representation of Theorem~4.1, which makes explicit the invariance of the 
perimeter under the exchange of the semi-axes $a$ and $b$. Within this 
framework, several distinguished cases are recovered: the limiting cross of 
Lam\'e as $s\to 0^{+}$, with perimeter $4(a+b)$, in Proposition~5.1; the 
parabolic star for $s=1/2$ in Proposition~5.2; the rhombic transition value 
$s=1$, obtained as the common Abel-regularized limit of both branches in 
Corollary~5.3; the classical elliptic perimeter for $s=2$ in 
Corollary~5.4; and the rectangular limit as $s\to\infty$, again with 
perimeter $4(a+b)$, in Proposition~5.5. Thus, the analytic perimeter 
formulas, their symmetry properties, and the limiting cases are consistent 
with the geometric transition described by the family: from concave cuspidal 
arcs for $0<s<1$, through the rhombic case $s=1$, to convex smooth arcs for 
$s>1$.

\begin{remark}
Although $L_{(\infty)}=L_{(0)}$, the limiting curves are geometrically
distinct: $\mathscr{C}_{(s)}$ degenerates to a cross as $s\to 0^+$ and to a rectangle as $s\to\infty$. This
suggests a natural compactification of the parameter space in which the endpoints
are identified in length but not in shape. We hope to address this issue in a future publication.
\end{remark}

\begin{remark}[Linear superellipse and lower bound] Setting $b=0$, every superellipse degenerates to a horizontal line
with 
\begin{equation}\label{L_b=0}
L_{(s)}=4a\ ,\quad \forall s\in (0, \infty)\ .
\end{equation}
That is, if $b=0$, the length of any superellipse is represented as
\begin{equation}
L^+_{(s)}=
4a
\sum_{k=0}^\infty
\frac{(-1)^k(\alpha)_k}{k!\ (s\ k+1)}
+4a
\sum_{k=0}^\infty
\frac{(-1)^k(\alpha)_k}{k!\ (s\ k+1)}
\ {}_2F_1\left(-1/2,1;u^+_k+1;1\right)
\end{equation}
and
\begin{equation}
L^-_{(s)}=
4a
\sum_{k=0}^\infty
\frac{(-1)^k(\alpha)_k}{(k+1)!}
\frac{1}{s}+4a
\sum_{k=0}^\infty
\frac{(-1)^k(\alpha)_k}{(k+1)!}
\frac{1}{s}\ {}_2F_1\left(-1/2,1;u^-_k+1;1\right)\ .
\end{equation}
Using the identities collected in Appendix~\ref{Appendix_C}, one obtains the sharp two-sided estimate
\begin{equation}
  4a<L_{(s)}<4(a+b)\ .
\end{equation}

\end{remark}

On the other hand, the polar equation for the superellipse in the first quadrant can be rearranged as $r(b^s\cos^s\theta+a^s\sin^s\theta)^{1/s}=ab$.
 When $b=0$, the equation has two solutions. The first is the point $r=0$ for all $\theta\neq 0$. The second corresponds to $\theta=0$ for all $r\neq 0$. That is, the resulting curve is given by
\begin{equation}
\mathscr{C}_s\xrightarrow[\text{}]{b\rightarrow 0}
\left\{(r, 0) /r\in (0,a]\right\}\cup\left\{(0, \theta) /\theta\in (0,\pi/2]\right\}
\end{equation}
In the second and third quadrants, we obtain two horizontal lines $\theta=\pi$ (one for each quadrant), and in the fourth quadrant, we obtain another horizontal line, $\theta=2\pi$. Thus, the superellipse degenerates into a linear superellipse at $b=0$, with length given by (\ref{L_b=0}): namely, a horizontal line on the polar axis, independent of $s$. Therefore, in addition to interpolating between the cross and the rectangle, the length of the superellipse has a lower bound given by the linear superellipse.

\appendix
\section*{\centering{Appendix}}

\section{Gauss hypergeometric function}\label{AppendixA}
\numberwithin{equation}{section}
\setcounter{equation}{0}

We recall the basic facts about the Gauss hypergeometric function used throughout the paper \cite{abramowitz1964}. In its power-series representation, it is defined by
\begin{equation}\label{2F1-def-Serie}
{}_2F_1(\text{a},\text{b};\text{c};z)=
\sum_{k=0}^{\infty}
\frac{(\text{a})_k(\text{b})_k}{(\text{c})_k}
\frac{z^k}{k!},
\qquad
c\notin\mathbb Z{\le 0},
\end{equation}
where $(\text{q})_k=\Gamma(\text{q}+k)/\Gamma(\text{q})$ denotes the Pochhammer symbol. If either a or b is a non-positive integer, the series terminates and defines a polynomial. Otherwise, its radius of convergence is one. The convergence of the power series is characterized as follows:
\begin{enumerate}
\item If $|z|<1$, the series converges absolutely.
\item If $|z|>1$, the series diverges, except in the terminating cases.

\item If $z=1$, the series converges precisely when $\operatorname{Re}(\text{c}-\text{a}-\text{b})>0$, apart from terminating cases. Under this condition, Gauss's summation formula gives
\begin{equation}
{}_2F_1(\text{a},\text{b};\text{c};1)
=
\frac{\Gamma(\text{c})\Gamma(\text{c}-\text{a}-\text{b})}
{\Gamma(\text{c}-\text{a})\Gamma(\text{c}-\text{b})}.
\end{equation}

\item\label{Ap4} If $|z|=1$, and $z\neq 1$,
the series converges absolutely when $\operatorname{Re}(\text{c}-\text{a}-\text{b})>0$, and converges conditionally when $-1<\operatorname{Re}(\text{c}-\text{a}-\text{b})\le 0$.
\end{enumerate}

For real $z$, the condition $|z|=1$ with $z\neq 1$ reduces to $z=-1$. Therefore, this case must not be confused with $z>1$, which lies outside the disk of convergence.

Outside the preceding cases, the power series does not provide an ordinary convergent representation of the hypergeometric function. Nevertheless, the function may still be defined by analytic continuation.

A standard continuation is provided by Euler's integral representation. Under the restrictions
$\operatorname{Re}(\text{c})>\operatorname{Re}(\text{b})>0$, and $|\arg(1-z)|<\pi$, one has
\begin{equation}\label{2F1-def-Integr}
{}_2F_1(\text{a},\text{b};\text{c};z)=\frac{\Gamma(\text{c})}
{\Gamma(\text{b})\Gamma(\text{c}-\text{b})}
\int_0^1 t^{\text{b}-1} (1-t)^{\text{c}-\text{b}-1}
(1-zt)^{-\text{a}}\,dt .
\end{equation}

The condition $|\arg(1-z)|<\pi$ fixes the branch of the complex power and excludes the standard cut $z\in[1,\infty)$ when $z$ is real. In particular, if $0<z<1$, then $1-z>0$, and the condition is automatically satisfied. By contrast, for real $z>1$, one has $1-z<0$, so $1-z$ lies on the negative real axis and $|\arg(1-z)|=\pi$. Hence, Euler's integral representation is not valid there as an ordinary real integral. Equivalently, the factor $(1-zt)^{-\text{a}}$ has a singular point at $t=1/z\in(0,1)$. In such cases, the hypergeometric function must be understood by analytic continuation, for instance through boundary values from the upper or lower half-plane, or by means of transformation formulas. This obstruction does not occur for an argument of the form $-z$, with $z>1$, since $1-(-z)=1+z>0$, and therefore the branch condition is again satisfied.

The elementary value at the origin is
\begin{equation}
{}_2F_1(\text{a},\text{b};\text{c};0)=1.
\end{equation}
This follows directly from the power series, since all terms with $k\ge 1$ vanish at $z=0$. In general, this identity does not extend to $z\neq 0$. The hypergeometric function is identically equal to one only in degenerate cases such as
a$=0$ or b$=0$, for which the series reduces to its constant term.

The Pfaff transformation is
\begin{equation}\label{Pfaff}
{}_2F_1(\text{a},\text{b};\text{c};z)=(1-z)^{-\text{a}} {}_2F_1\left(\text{a},\text{c}-\text{b};\text{c};\frac{z}{z-1}\right),
\qquad
\text{c}\notin\mathbb Z{\le 0}.
\end{equation}
This identity is useful when the original argument is outside the unit disk but the transformed argument lies inside it. For example, for an argument of the form $-z$, with $z>1$, one obtains
\begin{equation}
\left|
\frac{-z}{-z-1}
\right|=\frac{z}{z+1}
<1.
\end{equation}

Finally, the binomial reduction follows as the particular case b$=$c of Pfaff's transformation. Thus,
\begin{equation}\label{F(accz)}
{}_2F_1(\text{a},\text{c};\text{c};z)=(1-z)^{-\text{a}}.
\end{equation}

This collection of formulas provides the basic hypergeometric framework required in the paper: the power series determines the convergence regimes, Euler's integral representation gives an analytic continuation beyond the original disk of convergence, and Pfaff's transformation allows one to rewrite the argument in a region where a convergent series representation is available.

\section{Auxiliary identities for the limiting sums}\label{Appendix_C}
Let \(s>0\), \(s\neq 1\), and let \(\alpha=1+1/s\). The auxiliary identities used in the limiting analysis follow from Abel summation and from Gauss's evaluation formula for the hypergeometric function at unity.
\begin{itemize}
    \item For \(0<\epsilon<1\), using
\[
\frac{1}{sk+1}=\int_0^1 x^{sk}\,dx,
\]
and applying the binomial expansion, we obtain
\[
\lim_{\epsilon\to1^{-}}\sum_{k=0}^{\infty}
\frac{(-1)^k(\alpha)_k}{k!(sk+1)}\epsilon^k
=
 \lim_{\epsilon\to1^{-}} \int_0^1 (1+\epsilon x^s)^{-\alpha}\,dx .
\]
Taking \(\epsilon\to1^{-}\) in the Abel sense, it follows that
\begin{equation}\label{Series1}
\sum_{k=0}^{\infty}
\frac{(-1)^k(\alpha)_k}{k!(sk+1)}
=
2^{-1/s}.
\end{equation}
\item Similarly, using
\[
\frac{1}{k+1}=\int_0^1 x^k\,dx,
\]
we have
\[
\lim_{\epsilon\to1^{-}} \frac1s
\sum_{k=0}^{\infty}
\frac{(-1)^k(\alpha)_k}{(k+1)!}r^k
=
\lim_{\epsilon\to1^{-}} \frac1s
\int_0^1 (1+\epsilon x)^{-\alpha}\,dx .
\]
Letting \(\epsilon\to1^{-}\), we conclude that
\begin{equation}\label{Series2}
\frac1s
\sum_{k=0}^{\infty}
\frac{(-1)^k(\alpha)_k}{(k+1)!}
=
1-2^{-1/s}.
\end{equation}
\end{itemize}
It remains to evaluate the hypergeometric factors. Gauss's formula gives
\[
{}_2F_1\left(-\frac12,1;u^\pm_k+1;1\right)
=
\frac{\Gamma(u^\pm_k+1)\Gamma\left(u^\pm_k+\frac12\right)}
{\Gamma\left(u^\pm_k+\frac32\right)\Gamma(u^\pm_k)}
=
\frac{u^\pm_k}{u^\pm_k+\frac12},
\]
where the condition \(u^\pm_k+1/2>0\) is satisfied for \(s>0\). Since
\[
u^+_k+\frac12
=
\frac{s(k+1)}{2(s-1)},\qquad u^-_k+\frac12
=
\frac{sk+1}{2(1-s)},
\]
\begin{itemize}
    \item For the positive branch, 
\[
{}_2F_1\left(-\frac12,1;u^+_k+1;1\right)
=
\frac{sk+1}{s(k+1)},
\]
we get
\begin{equation}\label{Serie3}
\sum_{k=0}^{\infty}
\frac{(-1)^k(\alpha)_k}{k!(sk+1)}
{}_2F_1\left(-\frac12,1;u^+_k+1;1\right)
=
1-2^{-1/s}.
\end{equation}
\item For the negative branch, Gauss's formula gives
\[
{}_2F_1\left(-\frac12,1;u^-_k+1;1\right)
=
\frac{s(k+1)}{sk+1}.
\]
Consequently,
\begin{equation}\label{Series4}
\frac1s
\sum_{k=0}^{\infty}
\frac{(-1)^k(\alpha)_k}{(k+1)!}
{}_2F_1\left(-\frac12,1;u^-_k+1;1\right)
=
2^{-1/s}.
\end{equation}
\end{itemize}


\end{document}